 \newtheorem{thm}{Theorem}[section]
 \newtheorem{lm}[thm]{Lemma}
 \newtheorem{res}[thm]{Result}
 \newtheorem{crl}[thm]{Corollary}
 \newtheorem{prop}[thm]{Proposition}
 \theoremstyle{definition}
 \newtheorem{rmk}[thm]{Remark}
 \newtheorem{df}[thm]{Definition}
 \newtheorem{constr}[thm]{Construction}
 \newtheorem{prob}{Problem}
 \newcommand{\FF}{\mathbb F}
 \newcommand{\vspan}[1]{\left \langle #1 \right \rangle}
 \newcommand{\set}[1]{ \left \{ #1 \right \} }
 \newcommand{\sett}[2]{ \left\{ #1 \, \, || \, \, #2 \right \} }
 \newcommand{\gauss}[2]{\genfrac{[}{]}{0pt}{}{#1}{#2}_q}
 \newcommand{\pg}{\textnormal{PG}}
 \newcommand{\PG}{\textnormal{PG}}
 \newcommand{\mc}{\mathcal C}
 \newcommand{\wt}{\textnormal{wt}}
 \newcommand{\supp}{\textnormal{supp}}
 \newcommand{\one}{\mathbf 1}
 \newcommand{\zero}{\mathbf 0}
 \newcommand{\mm}{\mathcal M}
 \renewcommand{\mp}{\mathcal P}
 \newcommand{\mb}{\mathcal B}
 \newcommand{\mo}{\mathcal O}
 \renewcommand{\phi}{\varphi}
\title{
A note on small weight codewords of projective geometric codes and on the smallest sets of even type
}
\author{Sam Adriaensen\thanks{Pleinlaan 2, 1050 Elsene, Belgium. \url{sam.adriaensen@vub.be}.} \\ {\it Vrije Universiteit Brussel} }
\date{}
\begin{document}

\maketitle

\begin{abstract}
 In this paper, we study the codes $\mc_k(n,q)$ arising from the incidence of points and $k$-spaces in $\PG(n,q)$ over the field $\FF_p$, with $q = p^h$, $p$ prime.
 We classify all codewords of minimum weight of the dual code $\mc_k(n,q)^\perp$ in case $q \in \{4,8\}$.
 This is equivalent to classifying the smallest sets of even type in $\pg(n,q)$ for $q \in \{4,8\}$.
 We also provide shorter proofs for some already known results, namely of the best known lower bound on the minimum weight of $\mc_k(n,q)^\perp$ for general values of $q$, and of the classification of all codewords of $\mc_{n-1}(n,q)$ of weight up to $2q^{n-1}$.
\end{abstract}

\noindent \textbf{Keywords.} Projective geometry, Coding Theory, Minimum weight, Sets of even type.

\noindent \textbf{MSC2020.} 51E20, 05B25, 94B05.

\section{Introduction}

In this paper, we study codes arising from the incidence relation of points and subspaces of a fixed dimension $k$ in projective space $\pg(n,q)$, with $q=p^h$, $p$ prime.
Let $\mp$ denote the set of points of $\PG(n,q)$.
Then the code $\mc_k(n,q)$ is defined as the subspace of the vector space $\FF_p^\mp$ of functions $\mp \to \FF_p$, spanned by the characteristic vectors of the $k$-spaces.

The study of these codes was initiated over 50 years ago, see e.g.\ Goethals and Delsarte \cite{goethals}.
Since then, there has been a great interest in small weight codewords both in $\mc_k(n,q)$ and its dual code $\mc_k(n,q)^\perp$.
The most challenging problem is determining the minimum weight of the dual codes, and classifying all codewords of minimum weight.
Lavrauw, Storme, and Van de Voorde \cite{lavrauwB} reduced this problem to the case $k=1$.
So far, the minimum weight of $\mc_1(n,q)$ has only been determined exactly when $q$ is prime, when $q$ is even, and for some small values of $n$ and $q$.
Furthermore, in general, we only have a classification of minimum weight codewords of $\mc_1(n,q)^\perp$ in case $q$ is prime.

If $q$ is even, then codewords of $\mc_1(n,q)^\perp$ of weight $w$ are equivalent to sets of points of size $w$ in $\pg(n,q)$ that intersect every line in an even number of points.
Such sets are called \emph{sets of even type}.
Calkin, Key, and de Resmini \cite{calkinkeyderesmini} used a BCH bound by Delsarte \cite{delsarte70bch} to prove that the minimum weight of $\mc_1(n,q)^\perp$, or equivalently the minimum size of a set of even type in $\pg(n,q)$, is $q^{n-2}(q+2)$.
The only known examples of sets of even type of this size are cylinders with an $(n-3)$-dimensional vertex over hyperovals.
It is known that these are the only examples in case $q=2$ \cite[Proposition 3]{bagchi} or $q=4$ and $n\in \{3,4\}$ \cite{hirschfeldhubaut}, \cite[Table 4.9]{packer95}.
The main result of this paper is the following.

\begin{thm}
 \label{ThmMainHypercylinders}
 If $q \in \{4,8\}$, the only sets of even type in $\pg(n,q)$ of size $q^{n-2}(q+2)$ are cylinders over regular hyperovals.
\end{thm}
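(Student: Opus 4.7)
I would proceed by induction on $n$. For $q=4$ the base cases $n=3$ and $n=4$ are already in the literature \cite{hirschfeldhubaut, packer95}; for $q=8$ the base case $n=3$ must be established separately, presumably via a direct case analysis of plane sections in $\PG(3,8)$ combined with the classification of hyperovals in $\PG(2,8)$. Assuming the theorem in dimensions $<n$, let $S$ be a set of even type of size $q^{n-2}(q+2)$ in $\PG(n,q)$.

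Each hyperplane $H$ meets $S$ in a set of even type of $H\cong\PG(n-1,q)$, so by induction $|S\cap H|=0$ or $|S\cap H|\geq q^{n-3}(q+2)$, with equality characterising cylinders over regular hyperovals in $H$. Writing $a_i$ for the number of hyperplanes with $|S\cap H|=i$, and $\theta_k=(q^{k+1}-1)/(q-1)$, the standard intersection identities read
\[
\sum_i a_i=\theta_n,\qquad \sum_i i\,a_i=|S|\,\theta_{n-1},\qquad \sum_i \binom{i}{2}a_i=\binom{|S|}{2}\theta_{n-2}.
\]
The induction gap $a_i=0$ for $1\leq i<q^{n-3}(q+2)$ combined with the first two identities already forces the existence of an external hyperplane $H_0$: otherwise $|S|\geq q^{n-3}(q+2)\,\theta_n/\theta_{n-1}>q^{n-2}(q+2)$, since $\theta_n/\theta_{n-1}>q$. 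Working in the affine space $\ag(n,q)=\PG(n,q)\setminus H_0$ makes $S$ into a set meeting every affine line evenly; combining this affine perspective with the three identities above (in particular by examining parallel pencils of affine hyperplanes, each of which partitions $S$ into $q$ pieces that by induction are either empty or of minimum weight) should pin down the entire weight spectrum $\{a_i\}$, ideally showing every hyperplane is either external or minimum.

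The final step is to reconstruct a global cylinder structure from the local ones. Fix a minimum hyperplane $H$: inside $H$ the set $S\cap H$ is a cylinder with $(n-4)$-dimensional vertex $V_H$ over a regular hyperoval $\Omega_H$ in a plane $\pi_H\subset H$. For a second minimum hyperplane $H'$, the $(n-2)$-space $H\cap H'$ carries the set of even type $S\cap H\cap H'$, to which induction again applies, aligning the local data of $H$ and $H'$ along their common subspace. Iterating this comparison over all minimum hyperplanes should exhibit a single $(n-3)$-dimensional vertex $V$, a plane $\pi$ skew to $V$, and a hyperoval $\Omega\subset\pi$ realizing $S$ as the cylinder with vertex $V$ over $\Omega$; the base case then forces $\Omega$ to be regular.

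I expect the main obstacle to be precisely this reconstruction of the global vertex $V$ and base plane $\pi$ from the collection of local data $(V_H,\pi_H,\Omega_H)$: ensuring the vertices fit into a common $(n-3)$-space, the planes project to a common plane, and the various base hyperovals are slices of a single one, is delicate combinatorics. The affine framework established above (parallel minimum hyperplanes ought to share vertex-like data) should be essential, and the restriction $q\in\{4,8\}$ will likely enter only through the base of the induction, where the explicit classification of hyperovals in $\PG(2,q)$ is used.
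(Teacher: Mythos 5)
Your proposal has two genuine gaps, and they sit exactly at the two load-bearing points of the theorem. First, the base case $n=3$, $q=8$ is the main new content of the paper, and it is not obtainable by ``a direct case analysis of plane sections combined with the classification of hyperovals in $\PG(2,8)$.'' One must first exclude $6$-secant lines, which requires nontrivial blocking-set machinery: either the R\'edei-type theorem of Ball--Blokhuis--Brouwer--Storme--Sz\H onyi (as in \Cref{LmLargeSecant}, where the absence of proper subfields of $\FF_8$ other than $\FF_2$ is what kills large secants) or at least Bruen's $q+\sqrt q+1$ bound. Only then does one know every line is a $0$-, $2$- or $4$-secant, and even from there the paper needs a multi-stage count (three $4$-secants through each point of $S$, exactly five $16$-secant planes and no $14$-secant planes via a divisibility argument, all $16$-secant planes sharing a common line, and a final non-integrality contradiction). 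Deferring all of this to ``presumably'' leaves the central claim unproven; note also that the restriction $q\in\{4,8\}$ enters precisely through this subfield argument, not merely through the regularity of hyperovals as you suggest at the end.

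Second, the reconstruction of a global vertex and base plane from local cylinder data is, by your own admission, left open, and it is not routine. The paper avoids your hyperplane induction altogether: it reduces from $\pg(n,q)$ directly to $\pg(3,q)$ (not to $\pg(n-1,q)$) by a short count locating a plane $\pi$ with $S\cap\pi$ a hyperoval, showing every solid through $\pi$ meets $S$ in a minimum set of even type (hence a hypercylinder once the $3$-dimensional case is settled), and then invoking the external stability result \Cref{ResCasertaBoiz}, which performs exactly the global gluing you identify as the obstacle. Your counting argument producing an external hyperplane is correct as far as it goes, but be careful with the claim that the $q$ affine pieces of a parallel pencil are ``by induction either empty or of minimum weight'': the induction hypothesis (or \Cref{ResSizeSmallestEvenSet}) only gives a lower bound of $q^{n-3}(q+2)$ on nonempty pieces, and the conclusion that each is exactly minimum requires first showing that all $q$ pieces are nonempty. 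Without either \Cref{ResCasertaBoiz} or a worked-out substitute for the gluing, the induction step is incomplete.
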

As discussed above, this extends the classification of the minimum weight codewords of $\mc_k(n,q)^\perp$ for $q=2$ to the cases $q \in \{4,8\}$.

\bigskip

The rest of this paper is concerned with providing simpler proofs for known results in the study of these codes.
We give a short proof of the best-known general lower bound on the minimum weight of $\mc_k(n,q)^\perp$, which was originally proven by Bagchi and Inamdar \cite{bagchi}.
We remark that this proof was previously discovered by Aart Blokhuis, but never published \cite{zsuzsa}.
Recently, similar ideas have been used in \cite{deboeckvandevoorde2022} to obtain a small improvement on the bound by Bagchi and Inamadar for the codes $\mc_1(2,p^2)$, $p$ prime.

\bigskip

The classification of codewords of small weight of $\mc_k(n,q)$ has also received a great deal of attention.
Building on the work of Lavrauw, Storme, Sziklai, and Van de Voorde \cite{lavrauw,lavrauwstormesziklaivandevoorde}, Polverino and Zullo \cite{polverino} proved the following result.

\begin{res}[{\cite{polverino}}]
 \label{ResOlgaFerdi}
 Suppose that $c \in \mc_{n-1}(n,q)$ is a non-zero codeword of weight $w \leq 2q^{n-1}$.
 Then either
 \begin{itemize}
  \item $w = \frac{q^n-1}{q-1}$ and $c$ is the scalar multiple of the characteristic function of a hyperplane, or
  \item $w=2q^{n-1}$ and $c$ is the scalar multiple of the difference of two characteristic functions of hyperplanes.
 \end{itemize}
\end{res}
In this paper, we provide a significantly shorter, self-contained proof of this result.
This proof uses a generalisation of a lemma by Blokhuis, Brouwer, and Wilbrink \cite{blokhuisbrouwerwilbrink}, which was discovered by Olga Polverino and Ferdinando Zullo, but never published.

We remark that for large values of $q$, the above classification of codewords of $\mc_{n-1}(n,q)$ has been extended to codewords of weight up to roughly $4q^{n-1}$ in \cite{adriaensendenauxstormeweiner} and codewords of weight up to roughly $\frac 1 {2^{n-2 (1-\delta_{h,2})}} \sqrt q q^{n-1}$ for $q=p^h$ with $h>1$ in \cite{bartolidenaux}, building on results by Sz\H onyi and Weiner \cite{szonyiB}.
The classification was generalised to the codes $\mc_k(n,q)$ in \cite{adriaensendenaux} for codewords of weight up to roughly $3q^k$.

\bigskip

\noindent \textbf{Structure of the paper.}
\Cref{SecPrel} contains definitions and notation followed throughout the article.
In \Cref{SecMinWtDual} we first provide a simple proof of the bound by Bagchi and Inamdar (without their characterisation in case of equality) on the minimum weight of dual codes of incidence structures.
Afterwards, we prove \Cref{ThmMainHypercylinders}, and provide some intermediary results that could be helpful in classifying the smallest sets of even type for larger values of $q$.
\Cref{SecSmallWtPtHyp} contains the new proof of \Cref{ResOlgaFerdi}.
Finally, \Cref{SecConclusion} contains some concluding remarks.

\section{Preliminaries}
 \label{SecPrel}

Throughout this article, $p$ will denote a prime number and $q=p^h$ for some positive integer $h$.
The finite field of order $q$ will be denoted as $\FF_q$.
The $n$-dimensional projective space over $\FF_q$ will be denoted as $\pg(n,q)$.
We use the notation
\[
 \theta_n = \frac{q^{n+1}-1}{q-1} = q^n + q^{n-1} + \ldots + q + 1
\]
for the number of points in $\pg(n,q)$.

\bigskip

We will describe linear codes arising from incidence structures in their general form.
An \emph{incidence structure} is a pair $(\mp,\mb)$, with $\mb$ a (multi)set of subsets of $\mp$.
The elements of $\mp$ and $\mb$ are called \emph{points} and \emph{blocks} respectively.
For every block $B \in \mb$, we define its \emph{characteristic function} as
\[
 \chi_B: \mp \to \{0,1\}: P \mapsto \begin{cases}
  1 & \text{if } P \in B, \\
  0 & \text{otherwise.}
 \end{cases}
\]
Given a field $\FF$, we can consider these characteristic functions as functions $\mp \to \FF$, since $\FF$ contains a 0 and a 1.
We will denote the $\FF$-vector space of all functions $\mp \to \FF$ as $\FF^\mp$.
We will denote the constant functions mapping everything to 0 and 1 respectively by $\zero$ and $\one$.

\begin{df}
 Let $D = (\mp,\mb)$ be an incidence structure, and $\FF$ a field.
 The \emph{code} of $D$ over $\FF$ is defined as the subspace of $\FF^\mp$ spanned by the characteristic vectors of the blocks of $D$.
\end{df}

Alternatively, these codes can be defined as follows.
Choose an ordering for the points and blocks of $D$, i.e.\ suppose that $\mp = \{P_1, \dots, P_m\}$ and $\mb = \{B_1, \dots, B_n\}$.
Then we can define the incidence matrix of $D$ as the $n \times m$-matrix $M = (m_{i,j})$ with
\[
 m_{i,j} = \begin{cases}
  1 & \text{if } P_j \in B_i, \\
  0 & \text{otherwise}.
 \end{cases}
\]
We can define the code of $D$ over $\FF$ as the rowspace of the incidence matrix $M$.
Remark that changin the order of the elements of $\mb$ has no impact on the code, and reordering the elements of $\mp$ permutes the coordinate positions of the code, which gives rise to an equivalent code.
Hence, the ordering of the points and blocks makes no meaningful difference in the definition of the code.

If $c$ is a function in $\FF^\mp$, its \emph{support} is $\supp(c) = \sett{P \in \mp}{c(P) \neq 0}$ and its \emph{weight} is $\wt(c) = |\supp(c)|$.
The \emph{minimum weight} of a subspace $C \leq \FF^\mp$ is $d(C) = \min_{c \in C \setminus \set \zero} \wt(c)$.

The standard scalar product on $\FF^\mp$ is the non-degenerate symmetric bilinear form given by
\[
 v \cdot w = \sum_{P \in \mp} v(P) w(P).
\]
If $C$ is a subspace of $\FF^\mp$, its \emph{orthogonal complement} is the subspace
\[
 C^\perp = \sett{c \in \FF^\mp}{(\forall v \in C)(c \cdot v = 0)}.
\]
Subspaces of $\FF^\mp$ are also called \emph{codes}, and in that case, we call $C^\perp$ the \emph{dual code} of $C$.
Note that if $C$ is the code of an incidence structure $D = (\mp,\mb)$ over $\FF$, then
\[
 C^\perp = \sett{c \in \FF^\mp}{(\forall B \in \mb)( c \cdot \chi_B = 0 )}.
\]

\begin{df}
 Suppose that $q=p^h$, $p$ prime.
 Let $D$ be the incidence structure of points and $k$-spaces in $\pg(n,q)$.
 Then $\mc_k(n,q)$ will denote the code of $D$ over $\FF_p$.
\end{df}

For a survey on results concerning $\mc_k(n,q)$ and its dual, we refer the reader to \cite{lavrauwC}.
This survey dates from 2010, so since it publications, there have been advances in the study of these codes, see e.g.\ the papers mentioned in the introduction.

\begin{rmk}
 One could wonder why we only study the codes of points and $k$-spaces in $\pg(n,q)$ over $\FF_p$, and not over another finite field $\FF_{q'}$.
 This is well-motivated.
 Since the code is generated by functions taking only the values 0 and 1, the dimension and minimum weight of the code over $\FF_{q'}$ only depend on the characteristic of $\FF_{q'}$, so it makes sense to study these codes over the smallest field of given characteristic.
 Moreover, if the characteristic of $\FF_{q'}$ is not $p$, the dual code of points and $k$-spaces of $\pg(n,q)$ over $\FF_{q'}$ is either $\{\zero\}$ or $\vspan \one$ \cite[Theorem 2.5]{lavrauwC}.
\end{rmk}

We conclude with one more notational convention.
For two sets $A$ and $B$, we denote the symmetric difference of $A$ and $B$ as $A \triangle B = (A \cup B) \setminus (A \cap B)$.

\section{Minimum weight of the dual code}
 \label{SecMinWtDual}

In this section, we will investigate the minimum weight codewords of $\mc_k(n,q)^\perp$.
We first present a result by Lavrauw, Storme, and Van de Voorde that reduces the study of minimum weight codewords of $\mc_k(n,q)^\perp$ to the case $k=1$.

\bigskip

Suppose that $\pi$ is a subspace of $\pg(n,q)$, and let $\mp_\pi$ and $\mp$ denote the sets of points of $\pi$ and $\pg(n,q)$ respectively.
Then we can consider $\FF_p^{\mp_\pi}$ as a subspace of $\FF_p^{\mp}$.
Formally, for each $c \in \FF_p^{\mp_\pi}$, we can define $\iota(c) \in \FF_p^\mp$ as
\[
 \iota(c): \mp \to \FF_p: P \mapsto \begin{cases}
  c(P) &\text{if } P \in \pi, \\
  0 & \text{otherwise}.
 \end{cases}
\]
Then $\iota$ is a linear embedding of $\FF^{\mp_\pi}$ into $\FF^\mp$.
Let $\mc_k(\pi)$ denote the code $\mc_k(\dim \pi,q)$ defined on the points of $\pi$.
Then we can regard $\mc_k(\pi)$ and $\mc_k(\pi)^\perp$ as subspaces of $\FF_p^\mp$.
This allows us to rephrase the result by Lavrauw, Storme, and Van de Voorde in the following way.

\begin{res}[{\cite[Theorem 11]{lavrauwB}}]
 \label{ResReductionToLines}
 A codeword $c \in \mc_k(n,q)^\perp$ is of minimum weight if and only if there exists an $(n-k+1)$-space $\pi$ of $\pg(n,q)$ such that $c$ is a minimum weight codeword of $\mc_1(\pi)^\perp$.
\end{res}

\subsection{Lower bound on the minimum weight of the dual code}

The best general bound on the minimum weight of $\mc_1(n,q)^\perp$ known to the author follows from a theorem by Bagchi and Inamdar.

\begin{res}[{\cite[Theorem 3]{bagchi}}]
\label{ResLowerBound}
Let $D = (\mp,\mb)$ be an incidence structure, where every pair of points lies in at most $\lambda$ blocks, and every point lies in at least $n+\lambda$ blocks.
Consider the code $C$ of $D$ over a prime field $\FF_p$.
Take a non-zero codeword $c \in C^\perp$.
\begin{enumerate}
 \item Then $\wt(c) \geq 2 \left( \frac{n + \lambda}{\lambda} - \frac{n}{\lambda p} \right)$.
 \item If equality holds in (1), then there exists a scalar $\alpha \in \FF_p^*$ such that
\begin{itemize}
 \item the image of $c$ is $\{0,\pm \alpha\}$ and there are equally many points with coefficient $\alpha$ as points with coefficient $-\alpha$.
 \item any block intersects $c$ in either $p$ points with the same coefficient $\alpha$ or $-\alpha$, in two points with coefficients $\alpha$ and $-\alpha$, or in the empty set.
 \item any two points of $\supp(c)$ lie in exactly $\lambda$ blocks.
\end{itemize}
\end{enumerate}
\end{res} 

While their proof is rather long, we present a short proof for point (1).

\bigskip

\noindent \textit{Proof of Result \ref{ResLowerBound} (1).}
Each element $\alpha$ of $\FF_p$ corresponds naturally to an integer in $\set{0, \dots, p-1}$, which we will denote by $\nu(\alpha)$.
Consider for each $c \in C^\perp$ the multiset $\mm(c)$, where the elements of $\mm(c)$ are points of $\mp$, and each point $P$ has multiplicity $\nu(c(x))$ in $\mm(c)$.
Now take a minimum-weight codeword $c \in C^\perp$.
By rescaling $c$ we may assume that $c(P) = 1$ for some point $P \in \mp$.
There are at least $n+\lambda$ blocks through $P$.
Since $c \in C^\perp$, they must all contain at least $p-1$ other points of $\mm(c)$ (counted with multiplicity).
Every other point is counted at most $\lambda$ times.
This yields $$|\mm(c)| \geq 1 + \frac{n + \lambda}{\lambda}(p-1) = \frac{n + \lambda}{\lambda}p - \frac{n}{\lambda}.$$
Now we distinguish two cases.
First, suppose that there is no point $Q \in \mp$ with $c(Q) = -1$.
Then every block through $P$ contains at least 2 more points of $\supp(c)$, and $\wt(c) \geq 2 \frac{n+\lambda}{\lambda} + 1$.

Now suppose that there does exist a point $Q \in \mp$ with $c(Q) = -1$.
Then $(-c)(Q) = 1$, and we can apply the same reasoning as before to prove that
$$|\mm(-c)| \geq \frac{n + \lambda}{\lambda}p - \frac{n}{\lambda}.$$
Note that $|\mm(c)| + |\mm(-c)| = p \wt(c) $, because every point $R$ of $\supp(c)$ with $c(R) = \alpha \neq 0$ contributes $\nu(\alpha)$ to $|\mm(c)|$ and $\nu(-\alpha) = p - \nu(\alpha)$ to $|\mm(-c)|$.
This yields
\begin{equation}
 \tag*{\qedsymbol}
 \wt(c) = \frac{|\mm(c)| + |\mm(-c)|} p \geq \frac 2 p \left( \frac{n + \lambda}{\lambda}p - \frac{n}{\lambda} \right).
\end{equation}

We can draw some weaker conclusions in case of equality.
In that case, if $c$ is a minimum weight codeword then every point of $\supp(c)$ lies on exactly $n+\lambda$ blocks and any pair of points of $\supp(c)$ lies on exactly $\lambda$ blocks.
Furthermore, if $c(P) = \alpha \neq 0$, then every block through $P$ intersects $\mm(\alpha^{-1} c)$ in exactly $p$ points (counted with multiplicity). \Cref{ResLowerBound} (2) would follow if one can prove that $c$ only takes values 0 and $\pm \alpha$ for some $\alpha \notin \FF_p^*$.
However, it is not obvious how to derive this property from the arguments presented above for general values of $p$.

\begin{crl}[{\cite[Theorem 3]{bagchi}}]
 \label{CrlMinWeight}
 \[
  d(\mc_1(n,q)^\perp) \geq 2 \left( \theta_{n-1} \left( 1 - \frac 1p \right) + \frac 1p \right).
 \]
\end{crl}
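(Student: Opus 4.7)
The plan is simply to apply \Cref{ResLowerBound}(1) to the incidence structure of points and lines in $\PG(n,q)$, which is precisely the setting defining $\mc_1(n,q)$. I would first identify the parameters: in $\PG(n,q)$, any two distinct points lie on a unique line, so the parameter $\lambda$ in \Cref{ResLowerBound} equals $1$. Moreover, every point of $\PG(n,q)$ lies on exactly $\theta_{n-1} = (q^n-1)/(q-1)$ lines, which corresponds to $n + \lambda = \theta_{n-1}$ in the notation of the cited result, so the parameter $n$ there is $\theta_{n-1} - 1$.

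Plugging these values into the bound $\wt(c) \geq 2\!\left(\frac{n+\lambda}{\lambda} - \frac{n}{\lambda p}\right)$ gives
\[
 \wt(c) \geq 2\!\left( \theta_{n-1} - \frac{\theta_{n-1}-1}{p} \right) = 2\!\left( \theta_{n-1}\!\left(1 - \tfrac 1 p\right) + \tfrac 1 p \right),
\]
which is exactly the desired inequality. Since this holds for every non-zero $c \in \mc_1(n,q)^\perp$, it yields the stated lower bound on $d(\mc_1(n,q)^\perp)$.

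There is really no obstacle here: the corollary is a direct specialisation of \Cref{ResLowerBound}(1), and the only thing to verify is the clash in the use of the symbol $n$ (which denotes the projective dimension in the corollary, but a parameter of the incidence structure in \Cref{ResLowerBound}). I would make this notational point explicit at the start of the proof to avoid confusion, and then the algebraic rearrangement above finishes it off in one line.
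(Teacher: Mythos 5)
Your proposal is correct and matches the paper's (implicit) derivation exactly: the corollary is stated as a direct specialisation of \Cref{ResLowerBound}(1) to the point--line incidence structure of $\PG(n,q)$ with $\lambda = 1$ and $n+\lambda = \theta_{n-1}$, and your algebra checking that $2\bigl(\theta_{n-1} - \tfrac{\theta_{n-1}-1}{p}\bigr) = 2\bigl(\theta_{n-1}(1-\tfrac1p) + \tfrac1p\bigr)$ is right. Your remark about the clash in the symbol $n$ is a sensible point to make explicit.
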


\begin{rmk}
It was privately communicated to the author by Zsuzsa Weiner \cite{zsuzsa} that Aart Blokhuis independently discovered the same proof as above, but never published this.
Recently, similar arguments have been used by De Boeck and Van de Voorde \cite{deboeckvandevoorde2022} to increase the lower bound on the minimum weight of $\mc_1(2,p^2)^\perp$, $p \geq 5$ prime, from $2q-2p+2$ to $2q-2p+5$.
\end{rmk}

\subsection{Minimum weight of the dual code for \texorpdfstring{$q$}{\textit{q}} even}

For general $n$, the minimum weight of $\mc_1(n,q)^\perp$ is only known when $q$ is prime or even.
Moreover, for general $n$, minimum weight codewords are only characterised for $q$ prime.
In this subsection, we will investigate codewords of minimum weight of $\mc_1(n,q)$ for $q$ even.
We will characterise these codewords for $q \in \{4,8\}$, and state the intermediary results as generally as possible, in the hope they might someday be helpful to obtain a characterisation for higher values of $q$.

\bigskip

Throughout this section, assume that $q$ is even.
In this case a codeword $c$ only takes values in $\FF_2$.
Let $\mp$ denote the set of points of $\pg(n,q)$.
Then there is a 1-1 correspondence between the functions $c \in \FF_2^\mp$ of weight $w$, and sets of points of $\pg(n,q)$ of size $w$.
This correspondence is given by going from a function to its support, and from a set of points to its characteristic function.
Moreover, $c \in \mc_1(n,q)^\perp$ if and only if $\supp(c)$ is a set which intersects every line of $\pg(n,q)$ in an even number of points.
Call such a set a \emph{set of even type}.
Characterising minimum weight codewords of $\mc_1(n,q)^\perp$ is equivalent to characterising the smallest sets of even type in $\pg(n,q)$.
The minimum weight of $\mc_1(n,q)$, hence the minimum size of sets of even type in $\pg(n,q)$, is known.
It was derived by Calkin, Key, and de Resmini \cite{calkinkeyderesmini} from a BCH bound established by Delsarte \cite{delsarte70bch}.

\begin{res}[{\cite{calkinkeyderesmini,delsarte70bch}}]
 \label{ResSizeSmallestEvenSet}
Let $q$ be even, $n \geq 2$.
The smallest sets of even type in $\pg(n,q)$ have size $(q+2)q^{n-2}$.
\end{res}

We will call sets of even type of size $q^{n-2}(q+2)$ in $\pg(n,q)$ \emph{minimum sets of even type }.
The previous result tells us that the minimum sets of even type in $\pg(2,q)$ have size $q+2$.
This is a well-known result in finite geometry.
Such sets are called \emph{hyperovals}, and they intersect every line of $\pg(2,q)$ in 0 or 2 points.
They have been classified for $q \leq 64$ \cite{vandendriessche19}.
Segre \cite{segre57} proved already in the fifties that if $q \leq 8$, all hyperovals are \emph{regular}, i.e.\ projectively equivalent to
\[
 \sett{(s^2,st,t^2)}{(s,t) \in \pg(1,q)} \cup \set{(0,1,0)}.
\]

To prove that the bound in \Cref{ResSizeSmallestEvenSet} is tight, Calkin, Key, and de Resmini provided the following construction of minimum sets of even type in $\pg(n,q)$.

\begin{constr}[{\cite[Corollary 1]{calkinkeyderesmini}}]
 \label{ResHypercylinder}
 Let $\pi$ and $\tau$ be skew subspaces $\pg(n,q)$ of dimensions 2 and $n-3$ respectively.
 Let $\mo$ be a hyperoval in $\pi$.
 Then the set
 \[
  \left( \bigcup_{P \in \mo} \vspan{P,\tau} \right) \setminus \tau.
 \]
 is a minimum set of even type in $\pg(n,q)$.
\end{constr}

We adopt the terminology from \cite{adriaensenmannaertsantonastasozullo}.

\begin{df}
 Sets that can be constructed as in \Cref{ResHypercylinder} will be called \emph{hypercylinders}.
 The subspace $\tau$ in \Cref{ResHypercylinder} will be called the \emph{vertex} of the hypercylinder.
\end{df}

We note that if $q=2$, a hypercylinder is the complement of a hyperplane.
If $q \geq 4$ this is not true, and the vertex of the hypercylinder is the intersection of all hyperplanes disjoint to the hypercylinder, hence the vertex is uniquely determined by the hypercylinder.

\bigskip

It is a natural question to ask whether the only minimum sets of even type in $\pg(n,q)$ are the hypercylinders.
This is known to be true if $q=2$ \cite[Proposition 3]{bagchi}.
We will prove that it is also true for $q=4,8$.
As a first step, we reduce the problem to the case $n=3$.
We will use the following result.

\begin{res}[{\cite[Theorem 5.6]{adriaensenmannaertsantonastasozullo}}]
 \label{ResCasertaBoiz}
 Suppose that $S$ is a minimum set of even type in $\pg(n,q)$, $n \geq 4$.
 Assume that at least one plane intersects $S$ in a hyperoval and any solid through such a plane intersects $S$ in a hypercylinder.
 Then $S$ is a hypercylinder.
\end{res}

\begin{prop}
 \label{LmHypercylinderInduction}
Let $q$ be even.
Assume that the only minimum sets of even type in $\pg(3,q)$ are hypercylinders.
Then for any $n \geq 3$, the only minimum sets of even type in $\pg(n,q)$ are hypercylinders.
\end{prop}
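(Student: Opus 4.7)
I would proceed by induction on $n$, with the base case $n=3$ being precisely the hypothesis. For the inductive step, assume the statement in dimension $n-1 \geq 3$ and let $S$ be a minimum set of even type in $\pg(n,q)$. The plan is to verify the two premises of \Cref{ResCasertaBoiz}: produce a plane $\pi_0$ with $|S \cap \pi_0| = q+2$, and then show that every solid through $\pi_0$ meets $S$ in a hypercylinder.

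Given $\pi_0$, the second premise is a short double count over the $\theta_{n-3}$ solids through $\pi_0$. Counting incidences yields
\[
 \sum_{\Sigma \supset \pi_0} |S \cap \Sigma| = (q+2)\theta_{n-3} + \big(|S|-(q+2)\big) = q(q+2)\theta_{n-3},
\]
where the last step uses the identity $\theta_{n-3}+q^{n-2}-1 = q\theta_{n-3}$. Each such $\Sigma$ contains the hyperoval $\pi_0 \cap S$, so $\Sigma \cap S$ is a nonempty set of even type in $\Sigma \cong \pg(3,q)$, forcing $|\Sigma \cap S| \geq q(q+2)$ by \Cref{ResSizeSmallestEvenSet}. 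Since the average matches this lower bound, every $|\Sigma \cap S|$ equals $q(q+2)$, and the base hypothesis identifies each $S \cap \Sigma$ as a hypercylinder.

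The heart of the argument is locating $\pi_0$. I would find it inside a hyperplane $H$ with $|S \cap H| = q^{n-3}(q+2)$, since the inductive hypothesis then makes $S \cap H$ a hypercylinder in $H \cong \pg(n-1,q)$, and its base plane is a plane of $\pg(n,q)$ meeting $S$ in a hyperoval. To produce $H$, a two-moment count comparing $\sum_\ell |\ell \cap S|(|\ell \cap S|-1) = |S|(|S|-1)$ with $\sum_\ell |\ell \cap S| = |S|\theta_{n-1}$ first supplies a line with $|\ell \cap S| = 2$: if every secant had $|\ell \cap S| \geq 4$, the comparison would give $|S|-1 \geq 3\theta_{n-1}$, contradicting $|S| = q^{n-2}(q+2)$. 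Averaging $|S \cap H|$ over the $\theta_{n-2}$ hyperplanes through this $\ell$ gives
\[
 \frac{1}{\theta_{n-2}}\sum_{H \supset \ell}|S \cap H| = 2 + (|S|-2)\frac{\theta_{n-3}}{\theta_{n-2}},
\]
and expanding $(q^{n-2}(q+2)-2)(q^{n-2}-1)$ against $q^{n-3}(q+2)(q^{n-1}-1)$ shows this average is strictly less than $q^{n-3}(q+2)+2$. Every such $H$ satisfies $|S \cap H| \geq q^{n-3}(q+2)$ by \Cref{ResSizeSmallestEvenSet}; moreover a size comparison (namely $|S|-\theta_{n-1} < q^{n-1}$, while $H \subseteq S$ would force at least $q^{n-1}$ points of $S$ outside $H$) rules out $H \subseteq S$, so computing $|S \cap H|$ via lines through any point of $H \setminus S$ shows $|S \cap H|$ is even. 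The odd value $q^{n-3}(q+2)+1$ is therefore excluded, and the average bound forces at least one $H$ to attain exactly $q^{n-3}(q+2)$. The main obstacle is the tightness of this averaging: the average sits just below $q^{n-3}(q+2)+2$, so both the exact inequality and the evenness of $|S \cap H|$ (itself requiring a careful size comparison ruling out $H \subseteq S$) are essential to pin down a hyperplane attaining the minimum intersection.
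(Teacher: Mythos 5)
Your proof is correct, and the second half (the double count over the $\theta_{n-3}$ solids through a hyperoval plane, forcing every such solid to be $q(q+2)$-secant and hence a hypercylinder by the $n=3$ hypothesis) is essentially identical to the paper's. Where you genuinely diverge is in producing the hyperoval plane. The paper does this directly and without induction: a counting argument at a point of $S$ yields a $2$-secant line $\ell$, and a second counting argument over the planes through $\ell$ yields a plane meeting $S$ in exactly $q+2$ points, which is then a hyperoval because a nonempty planar set of even type has size at least $q+2$ and the minimum ones are hyperovals. You instead run an induction on $n$, locate a hyperplane $H$ with $|S\cap H|=q^{n-3}(q+2)$ via an averaging argument over hyperplanes through a $2$-secant (which requires the extra parity step ruling out $|S\cap H|=q^{n-3}(q+2)+1$, itself needing the exclusion of $H\subseteq S$), invoke the inductive hypothesis to see $S\cap H$ as a hypercylinder, and extract its base plane. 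Both routes work; the paper's is shorter and avoids induction entirely, while yours has the incidental payoff of showing that a minimum set of even type always admits a hyperplane meeting it in a minimum set of even type of one dimension lower. Two small points of care: \Cref{ResCasertaBoiz} asks that \emph{every} solid through \emph{any} plane meeting $S$ in a hyperoval be a hypercylinder section, so you should note that your solid count uses only $|S\cap\pi_0|=q+2$ and therefore applies verbatim to every hyperoval plane, not just the one you constructed; and you should make explicit that a hypercylinder in $H$ meets its base plane in exactly the hyperoval (each generator $\vspan{P,\tau}$ meets the base plane only in $P$), so that $S\cap\pi_0$ really has size $q+2$.
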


\begin{proof}
We prove this proposition using \Cref{ResCasertaBoiz}.
Let $S$ be a minimum set of even type in $\PG(n,q)$, $n > 3$.
First, we prove that there exists a plane intersecting $S$ in a hyperoval.
Take a point $P \in S$.
Then there exists some line $\ell$ through $P$ with $|\ell \cap S| = 2$, otherwise every line through $P$ contains at least $3$ more points of $S$ implying $(q+2)q^{n-2} = |S| \geq 1 + 3 \theta_{n-3}$, a contradiction.
Likewise there exists some plane $\pi$ through $\ell$ containing only $q+2$ points of $S$, otherwise every plane through $\ell$ contains at least $q+4$ points of $S$ and $(q+2)q^{n-2} \geq 2 + \theta_{n-2}(q+2)$, again a contradiction.

Now suppose that $\pi$ is a plane intersecting $S$ in a hyperoval.
Then every solid through $\pi$ intersects $S$ in at least $(q+2)q$ points, since $S \cap \sigma$ is an even set in $\sigma$.
If some solid intersects $S$ in more than $(q+2)q$ points, then
\[
 |S| > (q+2) + \theta_{n-3}(q(q+2) - (q+2)) = (q+2)q^{n-2},
\]
which contradicts $S$ being a minimum set of even type.
By our hypothesis, this implies that every solid through $\pi$ intersects $S$ in a hypercylinder.
Thus, the conditions of \Cref{ResCasertaBoiz} are satisfied.
\end{proof}

Thus, we will focus our attention on minimum sets of even type in $\pg(3,q)$.
Sets of even type, or rather their complements, in $\pg(3,4)$ have been thoroughly examined, see e.g.\ \cite{hirschfeldhubaut}.
In particular, it follows from their classification that the only minimum sets of even type in $\pg(3,4)$ are hypercylinders, see \cite[Table 3]{hirschfeldhubaut}.
Hence the same holds in $\pg(n,4)$ for all $n$.
We note that for $n=4$, this was already proven in the PhD thesis of Packer \cite[Table 4.9]{packer95}.
Moreover, \cite[Table 4.11]{packer95} contains the weight distribution of $\mc_1(4,4)^\perp$.

\bigskip

Calkin, Key and de Resmini \cite[Proposition 3]{calkinkeyderesmini} proved that if $S$ a minimum set of even type in $\pg(3,q)$ that intersects every line in 0, 2, or $q$ points, then $S$ is a hypercylinder.
Our first goal is to relax this condition.

A set $B$ of points in $\pg(2,q)$ is called a \emph{blocking set} if no line of $\pg(2,q)$ is disjoint to $B$.
Blocking sets are well-studied objects, see e.g.\ \cite{blokhuissziklaiszonyi}.
Sets of even type and blocking sets are linked in the following way.

\begin{lm}
 \label{LmBlockingSet}
 Let $S$ be a set of even type in $\pg(3,q)$, let $\pi$ be a plane in $\pg(3,q)$, and let $\ell$ be a line in $\pi$.
 Then $(\pi \cap S) \triangle \ell$ is a blocking set in $\pi$.
\end{lm}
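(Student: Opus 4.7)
The plan is to prove the stronger statement that every line of $\pi$ meets $T := (\pi \cap S) \triangle \ell$ in an \emph{odd} number of points, which immediately implies that $T$ is a blocking set. The whole argument rests on a straightforward parity count using the characteristic function identity $\chi_T = \chi_{\pi \cap S} + \chi_\ell \pmod 2$.

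I would fix an arbitrary line $m$ of $\pi$ and compute
\[
 |m \cap T| \;\equiv\; \sum_{P \in m} \chi_T(P) \;=\; \sum_{P \in m} \bigl( \chi_S(P) + \chi_\ell(P) \bigr) \;=\; |m \cap S| + |m \cap \ell| \pmod 2.
\]
Now I would analyse each term. First, since $S$ is a set of even type in $\pg(3,q)$, every line of $\pg(3,q)$ meets $S$ in an even number of points, and in particular $|m \cap S|$ is even. Second, since $m$ and $\ell$ both lie in the projective plane $\pi$, they either coincide, in which case $|m \cap \ell| = q+1$, or intersect in a single point, in which case $|m \cap \ell| = 1$. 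Because we are in the regime $q$ even (as specified at the start of the subsection containing the lemma), both $1$ and $q+1$ are odd.

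Combining these observations gives $|m \cap T| \equiv 0 + 1 \equiv 1 \pmod 2$, so in particular $|m \cap T| \geq 1$. Since $m$ was arbitrary, $T$ meets every line of $\pi$ and is therefore a blocking set in $\pi$. I do not anticipate any real obstacle here; the only mild subtlety is remembering to handle the case $m = \ell$ separately in the intersection count, but the parity of $q+1$ resolves it immediately under the standing assumption that $q$ is even.
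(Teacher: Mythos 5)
Your proof is correct and is essentially the paper's own argument: both reduce to the identity $\chi_{(\pi\cap S)\triangle\ell} = \chi_{\pi\cap S} + \chi_\ell$ over $\FF_2$ and conclude that every line of $\pi$ meets the symmetric difference in an odd, hence nonzero, number of points. Your explicit handling of the case $m=\ell$ (where the intersection has $q+1$ points, odd since $q$ is even) is a small point the paper leaves implicit, but the route is the same.
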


\begin{proof}
 Take a line $\ell'$ in $\pi$.
 Then
 \[
  \chi_{(\pi \cap S) \triangle \ell} \cdot \chi_{\ell'}
  = (\chi_{\pi \cap S} + \chi_\ell) \cdot \chi_{\ell'}
  = \chi_S \cdot \chi_{\ell'} + \chi_{\ell} \cdot \chi_{\ell'}
  = 0 + 1 = 1.
 \]
 Hence, $(\pi \cap S) \triangle \ell$ intersects $\ell'$ in an odd number of points, which in particular means that $(\pi \cap S) \triangle \ell$ and $\ell'$ are not disjoint.
\end{proof}

Suppose that $B$ is a blocking set and $\ell$ is a line not completely contained in $B$.
Take a point $P \in \ell \setminus B$.
There are $q$ more lines through $P$, all containing a point of $B$.
Hence, $|B \setminus \ell| \geq q$.
If equality holds, we say that $\ell$ is a \emph{Rédei line} of $B$, and $B$ is said to be \emph{of Rédei type}.
The above observation also implies the well-known fact \cite{boseburton} that a blocking set in $\pg(2,q)$ contains at least $q+1$ points, with equality if and only if the blocking set is a line.

\begin{df}
 Let $S$ be a minimum set of even type in $\pg(3,q)$.
 A plane $\pi$ is called a \emph{Rédei plane} with respect to $S$ if there exists a line $\ell \subset \pi$ such that $|S \cap \pi \setminus \ell| = q$, or equivalently $(S \cap \pi) \triangle \ell$ is a Rédei blocking set.
 We call $\ell$ a \emph{Rédei line} of $\pi$ with respect to $S$.
\end{df}

\begin{lm}
 \label{LmMax2q}
 Let $S$ be a minimum set of even type in $\pg(3,q)$, and let $\pi$ be a plane.
 Then $|S \cap \pi| \leq 2q$.
 If equality holds, then for every plane $\rho \neq \pi$, the line $\pi \cap \rho$ is either skew to $S$ or a Rédei line of $\rho$ with respect to $S$. 
\end{lm}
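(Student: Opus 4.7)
The plan is to fix a point $P$ of $S \cap \pi$ and perform a line-count through $P$, splitting the lines of $\pg(3,q)$ through $P$ according to whether they lie in $\pi$ or not. If $S \cap \pi = \emptyset$ the bound is trivial, so assume $S \cap \pi \neq \emptyset$. For any $P \in S \cap \pi$, every line through $P$ meets $S$ in an even positive number of points, hence in at least $2$ points, contributing at least one point to $S \setminus \{P\}$.

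Through $P$ there are $q^2+q+1$ lines of $\pg(3,q)$, of which $q+1$ lie in $\pi$ and the remaining $q^2$ do not. Since the lines through $P$ partition $\pg(3,q) \setminus \{P\}$,
\[
 |S| - 1 = \sum_{\ell \ni P}(|\ell \cap S| - 1).
\]
The lines through $P$ contained in $\pi$ partition $\pi \setminus \{P\}$, so they contribute exactly $|S \cap \pi| - 1$ to the right-hand side. Writing $m = |S \cap \pi|$ and using $|S| = q^2 + 2q$ (from \Cref{ResSizeSmallestEvenSet} with $n=3$), the $q^2$ non-$\pi$ lines through $P$ contribute $q^2 + 2q - m$ in total. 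Since each such contribution is at least $1$, we obtain $q^2 + 2q - m \geq q^2$, giving $m \leq 2q$.

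Assume now $m = 2q$. Equality in the previous estimate forces every non-$\pi$ line through every $P \in S \cap \pi$ to meet $S$ in exactly two points. Fix a plane $\rho \neq \pi$ and set $\ell = \rho \cap \pi$. If $\ell \cap S = \emptyset$ then $\ell$ is skew to $S$ and we are done; otherwise pick $P \in \ell \cap S$. Of the $q+1$ lines through $P$ contained in $\rho$, one is $\ell$ and the other $q$ are non-$\pi$ lines of $\rho$. By the saturation observation, each such line meets $S$ in $P$ and exactly one further point, which necessarily lies in $\rho \setminus \pi = \rho \setminus \ell$. Conversely, every point of $S \cap (\rho \setminus \ell)$ is joined to $P$ by a unique line among these $q$, and is the unique further $S$-point on that line. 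Hence $|S \cap \rho \setminus \ell| = q$, so $\ell$ is a Rédei line of $\rho$ with respect to $S$.

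The argument is essentially a clean double count, so I do not expect a serious obstacle. The only subtlety is keeping track of the split of lines through $P$ into $\pi$-lines and non-$\pi$-lines, and the corresponding partition of $S \setminus \{P\}$; once this is done the bound $m \leq 2q$ and the equality case both fall out immediately from the saturation of a single inequality.
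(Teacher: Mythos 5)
Your proof is correct, but it takes a genuinely different (and somewhat more elementary) route than the paper. The paper fixes a secant line $\ell \subset \pi$ and decomposes $S \setminus \pi$ over the $q$ planes $\rho \neq \pi$ through $\ell$: by \Cref{LmBlockingSet}, $(\rho \cap S) \triangle \ell$ is a blocking set, hence $|(\rho\cap S)\setminus\ell| \geq q$, giving $q(q+2) \geq |S\cap\pi| + q\cdot q$ at once, and equality immediately says $\ell$ is a Rédei line of every such $\rho$. You instead fix a point $P \in S \cap \pi$ and decompose $S \setminus \pi$ over the $q^2$ lines through $P$ not in $\pi$, using only the defining even-type property (every line through a point of $S$ is at least a $2$-secant); this bypasses \Cref{LmBlockingSet} and the blocking-set lower bound entirely, at the cost of a short extra argument in the equality case to convert ``all non-$\pi$ lines through points of $S \cap \pi$ are $2$-secants'' into $|S \cap \rho \setminus \ell| = q$ for each plane $\rho$ through a secant line $\ell$ of $\pi$. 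Both arguments are tight double counts of the same quantity; the paper's buys the Rédei conclusion for free from the blocking-set framework it reuses later, while yours is self-contained given only \Cref{ResSizeSmallestEvenSet}. All the steps in your equality analysis (the partition of $\rho\setminus\{P\}$ by the $q$ lines of $\rho$ through $P$ other than $\ell$, and the bijection with $S \cap (\rho\setminus\ell)$) check out.
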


\begin{proof}
 Suppose that $\pi \cap S \neq \emptyset$.
 Take a line $\ell$ in $\pi$ containing some points of $S$.
 Then the $q$ planes $\rho \neq \pi$ through $\ell$ intersect $S$ in a non-empty set of even type .
 Hence, $(\rho \cap S) \triangle \ell$ is a blocking set by \Cref{LmBlockingSet}, and thus contains at least $q$ points outside of $\ell$, with equality if and only if $\ell$ is a Rédei line of $\rho$.
 Therefore
 \[
  q(q+2) = |S| \geq |S \cap \pi| + q \cdot q.
 \]
 This implies that $|S \cap \pi| \leq 2q$, and in case of equality, $\ell$ is a Rédei line in all the other planes through it.
\end{proof}

\begin{prop}
 \label{LmQSecantThenHypercylinder}
Suppose that $q$ is even, and that $S$ is a minimum set of even type in $\pg(3,q)$.
If there is a $q$-secant line to $S$, then $S$ is a hypercylinder.
\end{prop}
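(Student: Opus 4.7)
The plan is to use the $q$-secant $\ell$ to force $S$ to be a cone over a hyperoval with apex equal to the unique point $P_0$ of $\ell$ not in $S$.

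First, I would show that every plane $\pi$ through $\ell$ meets $S$ in exactly $2q$ points. By \Cref{LmBlockingSet}, the set $(S \cap \pi) \triangle \ell$ is a blocking set in $\pi$, and since $\ell \cap S$ has $q$ points, its cardinality equals $|S \cap \pi| + (q+1) - 2q = |S \cap \pi| - q + 1 \geq q+1$. This yields $|S \cap \pi| \geq 2q$, and combined with \Cref{LmMax2q} we obtain equality. (Equivalently, a quick double count gives $\sum_{\pi \supset \ell} |S \cap \pi| = (q+1)q + (|S|-q) = 2q(q+1)$, so the average over the $q+1$ planes through $\ell$ is $2q$, forcing every summand to be $2q$.)

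In each of the $q+1$ planes $\pi_i$ through $\ell$, the blocking set $(S \cap \pi_i) \triangle \ell$ thus attains the minimum size $q+1$, hence is a line $m_i$ by the Bose-Burton observation recalled just before \Cref{LmMax2q} (and $m_i \neq \ell$ since $\ell \triangle \ell = \emptyset$ is not a blocking set). From $S \cap \pi_i = m_i \triangle \ell$ we read $\ell \cap S = \ell \setminus (m_i \cap \ell)$; since the left-hand side does not depend on $i$, the intersection point $m_i \cap \ell$ must be the same point for every $i$, namely the unique point $P_0$ of $\ell$ outside $S$. Consequently, $S \cup \{P_0\} = \ell \cup m_0 \cup m_1 \cup \ldots \cup m_q$, a union of $q+2$ concurrent lines through $P_0$.

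To finish, I would choose any plane $\sigma$ not containing $P_0$. Each of the $q+2$ lines of our family meets $\sigma$ in a single point, so $\sigma \cap S$ has exactly $q+2$ points and is a set of even type in $\sigma$. By \Cref{ResSizeSmallestEvenSet} such a set must be a hyperoval, and then $S$ is precisely the cone from $P_0$ over this hyperoval with the vertex deleted, i.e., a hypercylinder with vertex $P_0$. The whole argument is a matter of chaining the preceding lemmas together; the only spot requiring a small moment of care is verifying that all the lines $m_i$ are concurrent, and I do not foresee a serious obstacle.
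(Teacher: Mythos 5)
Your proposal is correct and follows essentially the same route as the paper: force every plane through the $q$-secant $\ell$ to meet $S$ in exactly $2q$ points via \Cref{LmBlockingSet} and \Cref{LmMax2q}, identify each blocking set $(S\cap\pi_i)\triangle\ell$ as a line through the unique point $P_0$ of $\ell\setminus S$, conclude that $S$ is a union of $q+2$ concurrent lines minus their common point, and cut with a plane avoiding $P_0$ to exhibit the hyperoval. The only (harmless) difference is that you spell out the concurrency of the lines $m_i$ a bit more explicitly and offer an alternative double-count for the $2q$-plane step.
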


\begin{proof}
Suppose that $\ell$ is a $q$-secant line to $S$.
Let $P$ denote the unique point of $\ell \setminus S$.
Take a plane $\pi$ through $\ell$.
Since $(\pi \cap S) \triangle \ell$ is a blocking set, $\pi$ contains at least $q$ more points of $S$.
It have to be exactly $q$ points, otherwise $|S \cap \pi| > 2q$, contradicting \Cref{LmMax2q}.
Thus, $(\pi \cap S) \triangle \ell$ is a blocking set of size $q+1$, hence a line.
This means that $\pi \cap S$ is the symmetric difference of two lines through $P$.
Therefore, $S$ is the union of $q+2$ lines through $P$, without $P$.
Take a plane $\rho$ not containing $P$.
Then $\rho$ intersects each of these $q+2$ lines in a different point.
Therefore, $S \cap \rho$ contains $q+2$ points, hence it is a hyperoval.
It now follows directly that $S$ is a hypercylinder with vertex $P$.
\end{proof}

Given a minimum set of even type $S$, call a line $\ell$ a \emph{large secant} if $|\ell \cap S| > \frac q 2$.
We will exploit the theory of Rédei blocking sets to prove that if $S$ is not a hypercylinder, it has at most one large secant, and a large secant implies the existence of a non-trivial subfield of $\FF_q$.
We will make use of the celebrated theorem by Ball, Blokhuis, Brouwer, Storme, and Sz\H onyi.

\begin{res}[\cite{blokhuisballbrouwerstormeszonyi,ball03}]
 \label{ResRedeiBall}
Let $B$ be a Rédei blocking set of $\pg(2,q)$ of size $q+m$, $q = p^h$, $p$ prime.
Let $s = p^e$ be maximal such that every line intersects $B$ in $1 \pmod s$ points. Then one of the following holds:
\begin{enumerate}
 \item $s=1$ and $\frac{q+3}2 \leq m \leq q+1$,
 \item $\FF_s$ is a proper subfield of $\FF_q$, and $\frac q s + 1 \leq m \leq \frac{q-1}{s-1}$,
 \item $s=q$, and $B$ is a line.
\end{enumerate}
\end{res}

\begin{lm}
 \label{LmLargeSecant}
Suppose that $S$ is a minimum set of even type in $\pg(3,q)$, and $S$ is not a hypercylinder.
Then there is at most one large secant line to $S$.
If a large secant $\ell$ exists, there exists some subfield $\FF_s$ of $\FF_q$, with $s \neq 2,q$, such that $q+1-\frac{q-1}{s-1} \leq |\ell \cap S| \leq q - \frac q s$.
\end{lm}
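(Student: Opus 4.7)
The plan is to split the statement into two parts: first, the existence of a subfield $\FF_s$ controlling $|\ell \cap S|$ for any large secant $\ell$, and second, the uniqueness of such a line.

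First I would exhibit a plane in which a large secant becomes a Rédei line. Fix a large secant $\ell$ with $k = |\ell \cap S|$. For any plane $\pi \supset \ell$, picking $Q \in \ell \cap S$ and noting that each of the $q$ lines through $Q$ in $\pi$ distinct from $\ell$ meets $S$ in at least $2$ points (since $|m \cap S|$ is even and contains $Q$), we obtain $|S \cap \pi| \geq k + q$. Summing $|S \cap \pi|$ over the $q+1$ planes through $\ell$ gives $|S| + kq = q^2 + 2q + kq$, so the total excess above the lower bound $(q+1)(k+q)$ equals $q - k$. Since $k > q/2$, at least $k+1$ planes satisfy $|S \cap \pi| = k + q$, and for any such $\pi$ the set $B = (S \cap \pi) \triangle \ell$ is a blocking set in $\pi$ (by \Cref{LmBlockingSet}) of size $2q+1-k$ with $\ell$ as a Rédei line.

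Next I would verify that every line in $\pi$ meets $B$ in an odd number of points: $|\ell \cap B| = q + 1 - k$ is odd, and for $m \neq \ell$ in $\pi$ one has $|m \cap B| = |m \cap S| \pm 1$, also odd. Hence the maximal parameter $s$ in \Cref{ResRedeiBall} is at least $2$, excluding case (1). Case (3) would force $|B| = q+1$, i.e.\ $k = q$; but then $\ell$ is a $q$-secant and \Cref{LmQSecantThenHypercylinder} would make $S$ a hypercylinder, contrary to hypothesis. Hence case (2) applies and gives $q + 1 - (q-1)/(s-1) \leq k \leq q - q/s$. Finally, $s = 2$ would force $k \leq q/2$, contradicting largeness; so $s \notin \{2, q\}$.

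For the uniqueness part, suppose $\ell_1, \ell_2$ are two large secants with $k_i = |\ell_i \cap S|$. The cleanest sub-case is when $\ell_1, \ell_2$ are coplanar meeting at a point $P \in S$: in $\pi = \langle \ell_1, \ell_2 \rangle$, each of the $q+1$ lines through $P$ contains at least $2$ points of $S$ (by evenness, as $P \in S$), and the identity $\sum_{m \ni P} |m \cap S| = |S \cap \pi| + q$ gives $|S \cap \pi| \geq k_1 + k_2 + q - 2$. Combined with \Cref{LmMax2q} this yields $k_1 + k_2 \leq q + 2$. But $k_1, k_2$ are even with $k_i > q/2$, and for $q = 2^h$ with $h \geq 2$ the interval $(q/2, q/2 + 2)$ contains only the odd integer $q/2 + 1$; no valid pair exists.

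The main obstacle is the remaining sub-cases, where $\ell_1, \ell_2$ are coplanar with $P \notin S$ or skew. My plan is to pick a Rédei plane $\pi$ for $\ell_1$ (available from the first part) that either contains $\ell_2$ or a point $P_\pi \in \ell_2 \cap S$; in the skew case this is guaranteed because among the $q+1$ planes through $\ell_1$ at least $k_1+1$ are Rédei and $k_2$ meet $\ell_2 \cap S$, while $k_1 + 1 + k_2 > q + 1$. In $\pi$ the subfield congruence $|m \cap B_1| \equiv 1 \pmod{s_1}$ translates into divisibility conditions on $|m \cap S|$: for example, $|\ell_2 \cap B_1| = k_2 + 1$ in the coplanar $P \notin S$ sub-case forces $s_1 \mid k_2$; and in the skew sub-case, every line through $P_\pi$ meeting $\ell_1 \setminus S$ satisfies $|m \cap S| \equiv 0 \pmod{s_1}$. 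Combined with the range for $k_i$ from the first part and sum identities such as $\sum_{m \ni P_\pi} |m \cap S| = k_1 + 2q$, these modular constraints should yield a contradiction in each sub-case; completing the arithmetic is the technical crux.
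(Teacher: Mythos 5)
Your first half is essentially correct and follows the paper's route: produce a R\'edei plane through $\ell$ by counting (the paper gets one such plane by a slightly cruder estimate, you get $k+1$ of them), apply \Cref{ResRedeiBall} to $B=(S\cap\pi)\triangle\ell$, kill case (3) via \Cref{LmQSecantThenHypercylinder}, and kill $s=2$ by largeness. Your exclusion of case (1) via the parity observation that every line meets $B$ oddly (hence $s\geq 2$) is a harmless variant of the paper's exclusion via the size of $B\cap\ell$. One useful consequence you do not extract, but should, is that case (2) with $s\geq 4$ forces $|\ell\cap S|\geq q+1-\frac{q-1}{3}=\frac{2q+4}{3}$; this stronger lower bound is exactly what powers the uniqueness argument.

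The genuine gap is in the uniqueness part. You only complete one of the three configurations (two large secants meeting in a point of $S$), and for the other two (meeting outside $S$, and skew) you offer a plan based on subfield/modular constraints that is not carried out and whose feasibility is unclear; ``should yield a contradiction'' is not a proof. Neither remaining case needs that machinery. For intersecting secants $\ell_1,\ell_2$ (whether or not the intersection point $P$ lies in $S$), pick $Q\in\ell_2\setminus(S\cup\{P\})$; each of the at least $m_1-1$ lines through $Q$ meeting $\ell_1\cap S$ must carry a further point of $S$ off $\ell_1\cup\ell_2$ by evenness, so $|S\cap\langle\ell_1,\ell_2\rangle|\geq m_2+2(m_1-1)\geq 3\cdot\frac{2q+4}{3}-2>2q$, contradicting \Cref{LmMax2q}. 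For skew secants, a point $P\in\ell_1\cap S$ must lie on a $2$-secant $\ell$ meeting $\ell_2$ (else $|S\cap\langle P,\ell_2\rangle|\geq 1+3(q+1)>2q$); counting points of $S$ over the $q+1$ planes through $\ell$, using \Cref{LmBlockingSet} to get at least $q$ points off $\ell$ in every plane and at least $m_i+q-2$ in the two planes $\langle\ell,\ell_i\rangle$, gives $|S|\geq 2+2\left(\frac{2q+4}{3}+q-2\right)+(q-1)q=q(q+2)+\frac{q+2}{3}$, a contradiction. Without arguments of this kind (or a completed version of your modular scheme), the ``at most one large secant'' claim is unproved.
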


\begin{proof}
Suppose that $\ell$ is a large secant, and $|\ell \cap S| = m$.
Then there exists a Rédei plane $\pi$ through $\ell$,
otherwise, all planes through $\ell$ contain at least $q+2$ points of $S \setminus \ell$ and
\begin{equation}
 \label{EqExistsRedei}
 q(q+2) = |S| \geq m + (q+1)(q+2),
\end{equation}
a contradiction.
Let $B$ be the Rédei blocking set $(\pi \cap S) \triangle \ell$.
Then $k := |\ell \cap B| = (q+1) - m \leq \frac q 2 - 1$.
By Result \ref{ResRedeiBall}, there exists some subfield $\FF_s$ such that $\frac q s + 1 \leq k \leq \frac{q-1}{s-1}$ or $s=q$ and $B$ is a line.
But if $B$ were a line, then $\ell$ would be a $q$-secant, and by \Cref{LmQSecantThenHypercylinder}, $S$ would be a hypercylinder.
Thus, $q+1-\frac{q-1}{s-1} \leq m \leq q - \frac q s$.
Since $m > \frac q 2$, $s \neq 2$.
In particular, this implies that $m \geq q+1 - \frac{q-1}{4-1} = \frac{2q + 4}3$.

Next, we prove that there cannot be more than one large secant.
So suppose that there are two large secants $\ell_1$ and $\ell_2$.
Write $m_i = |S \cap \ell_i|$.
We distinguish two cases.

\noindent \underline{Case 1. $\ell_1$ and $\ell_2$ intersect in a point $P$.}

Consider the plane $\pi$ spanned by $\ell_1$ and $\ell_2$.
Take a point $Q \in \ell_2 \setminus (S \cup \set P)$.
There are at least $m_1-1$ lines through $Q$, different from $\ell_2$ that intersect $\ell_1$ in a point of $S$.
Each of these lines must contain an extra point of $S$, not contained on $\ell_1$ or $\ell_2$.
Then
\[
 |S \cap \pi| \geq m_2 + 2 (m_1 -1) \geq 3\frac{2q+4}3 - 2 > 2q,
\]
which contradicts \Cref{LmMax2q}.

\noindent \underline{Case 2. $\ell_1$ and $\ell_2$ don't intersect.}

Take a point $P \in \ell_1 \cap S$.
If no line through $P$ in $\vspan{P,\ell_2}$ is a 2-secant, then all these lines contain at least 3 extra points of $S$, implying $|S \cap \vspan{P,\ell_2}| \geq 1+3(q+1)$, contradicting \Cref{LmMax2q}.
So let $\ell$ be a 2-secant through $P$ and some point of $\ell_2$.
Then for $i=1,2$, the plane $\vspan{\ell,\ell_i}$ contains at least $m_i+q$ points of $S$.
All other planes contain at least $q$ extra points of $S$.
Thus, looking at the planes through $\ell$,
\[
 q(q+2) = |S| \geq 2 + 2\left(\frac{2q+4}3 + q - 2 \right) + (q-1)q = (q+2)q + \frac{q+2}3,
\]
a contradiction.
\end{proof}

Unfortunately, the above proposition does not seem to be strong enough a tool to prove that the only minimum sets of even type in $\pg(3,q)$ are hypercylinders for general values of $q$.
However, if $q=8$, the above lemma proves that a minimum set of even type cannot have $6$-secant lines, and we will show below that this suffices to prove that hypercylinders are the only minimum sets of even type.
We remark that the exclusion of $6$-secant lines can be obtained using less heavy machinery.
Indeed, if a 6-secant $\ell$ would exist, it would lie in a Rédei plane (cf.\ \Cref{EqExistsRedei}).
This would imply the existence of a blocking set in $\PG(2,q)$ not containing a line, of size $((8+1)-6) + 8 = 11 < 8 + \sqrt 8 + 1$.
By a celebrated result of Bruen \cite{bruen71}, this is not possible.

\begin{prop}
 The only minimum sets of even type in $\pg(3,8)$ are hypercylinders.
\end{prop}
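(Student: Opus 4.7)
The plan is to suppose $S$ is a minimum set of even type in $\pg(3,8)$ that is not a hypercylinder, and derive a contradiction in four steps.

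First, I would invoke \Cref{LmQSecantThenHypercylinder} to rule out $8$-secants and \Cref{LmLargeSecant} to rule out $6$-secants: the latter's conclusion requires a proper intermediate subfield $\FF_s$ with $s \neq 2, q$, but the only subfields of $\FF_8$ are $\FF_2$ and $\FF_8$. Hence every line meets $S$ in $0$, $2$, or $4$ points, and double counting with $|S| = 80$ yields exactly $60$ $4$-secants, with each point of $S$ lying on precisely $3$ of them.

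Second, I would split $S$ into $A := \{P \in S : \text{the three } 4\text{-secants through } P \text{ are coplanar}\}$ and $B := S \setminus A$. A short count of lines through $P$ in the relevant plane shows that for $P \in A$, the plane $\pi$ containing all three $4$-secants through $P$ satisfies $|S \cap \pi| = 2q = 16$, saturating \Cref{LmMax2q}; and that for $P \in B$, each pair of its $4$-secants spans a distinct plane $\pi$ with $|S \cap \pi| = 14$. Writing $f_i$ for the number of planes $\pi$ with $|S \cap \pi| = i$, this yields $16 f_{16} = |A|$ and $14 f_{14} = 3|B|$. Combining with $|A|+|B|=80$ and $16 f_{16} \leq 80$, a divisibility argument modulo $14$ will force $f_{16} = 5$, hence $|A| = 80$ and $|B| = f_{14} = 0$; so the five $16$-planes partition $S$ into five sets of size $16$.

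Third — the central structural step — I would take two $16$-planes $\pi_1, \pi_2$, whose intersection line $m$ is automatically a $0$-secant since $S \cap \pi_1$ and $S \cap \pi_2$ are disjoint. For any $4$-secant $\ell$ of $S \cap \pi_1$, applying \Cref{LmMax2q} to $\pi_1$ shows that the $8$ planes through $\ell$ other than $\pi_1$ are $12$-planes. Applying \Cref{LmMax2q} to $\pi_2$ then excludes $2$-secants as $\rho \cap \pi_2$ (those would force $|S \cap \rho| = 10$), so $\rho \cap \pi_2$ is a $0$- or $4$-secant of $S \cap \pi_2$. As $\rho$ ranges over these $8$ planes, $\rho \cap \pi_2$ ranges bijectively over the $8$ lines of $\pi_2$ through $Q := \ell \cap m$ different from $m$; a direct count in $\pi_2$ (using $|S \cap \pi_2|=16$ and $Q \notin S$) then forces exactly $4$ of these $8$ lines to be $4$-secants of $S \cap \pi_2$. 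Summing over the $12$ $4$-secants of $S \cap \pi_1$ and using that $S \cap \pi_2$ likewise has exactly $12$ $4$-secants (each meeting $m$ in one point), the set $\{\ell \cap m : \ell \text{ is a }4\text{-secant of } S \cap \pi_1\}$ has cardinality exactly $3$; the symmetric argument with $\pi_1, \pi_2$ swapped then yields three collinear points $Q_1, Q_2, Q_3 \in m$ through which all $12$ $4$-secants of $S \cap \pi_1$ pass, four through each.

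Fourth, I would obtain the contradiction by a product-set argument in $\pi_1$. Choose affine coordinates on $\pi_1 \setminus m \cong \FF_8^2$ so that $Q_1, Q_2$ correspond to the horizontal and vertical directions. The $Q_1$- and $Q_2$-partitions of $S \cap \pi_1$ into four $4$-element classes simultaneously express $S \cap \pi_1$ as a union of $4$ full horizontal affine lines and as a union of $4$ full vertical affine lines, forcing $S \cap \pi_1 = X \times Y$ with $|X| = |Y| = 4$. Writing $Q_3$ as the direction $(1, \lambda)$ for some $\lambda \in \FF_8^*$, an affine line $\{(t, \lambda t + c) : t \in \FF_8\}$ in direction $Q_3$ meets $X \times Y$ in $4$ points precisely when $\lambda X + c = Y$ as subsets of $\FF_8$, an equation admitting at most one solution $c$. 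But the $Q_3$-partition demands four such lines — the desired contradiction. The main obstacle is Step~$3$, where two applications of \Cref{LmMax2q} on different $16$-planes must be combined with the pencil bijection between planes through $\ell$ and lines through $Q$ in $\pi_2$ in order to extract the three-special-points structure.
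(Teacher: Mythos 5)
Your Steps 1--3 are correct. Steps 1--2 coincide with the paper's opening (exclusion of $6$- and $8$-secants, the count of three $4$-secants per point, and the divisibility argument forcing five $16$-planes and no $14$-planes). Step 3 is a genuinely different and rather clean route to the three special points: the paper obtains them by analysing $10$- and $12$-secant planes through a line joining points of two different $16$-planes and then counting pairwise intersections of $4$-secants inside one $16$-plane, whereas you combine the R\'edei-line dichotomy of \Cref{LmMax2q} with the pencil bijection between planes through a $4$-secant $\ell$ and lines of $\pi_2$ through $\ell \cap m$; both arguments are valid.

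Step 4, however, contains a fatal error. The claim that $\lambda X + c = Y$ admits at most one solution $c$ is false over $\FF_8$: the solution set, if nonempty, is a coset of the stabiliser $\sett{d \in \FF_8}{\lambda X + d = \lambda X}$, which is an additive subgroup whose order divides $|X| = 4$, and in the elementary abelian group $(\FF_8,+)$ a $4$-set can very well be a coset of a subgroup of order $4$. Concretely, let $H$ be any $\FF_2$-subspace of $\FF_8$ of dimension $2$ and take $X = Y = H$, $\lambda = 1$: then $\lambda X + c = Y$ holds for all four $c \in H$, which is exactly what your $Q_3$-partition demands. Worse, the set $H \times H$ satisfies every constraint your Steps 1--3 impose on $S \cap \pi_1$: a line $y = \mu x + c$ meets it in $|\mu H \cap (H+c)|$ points, which is always $0$, $2$ or $4$ because $\mu H$ and $H$ are $\FF_2$-subspaces of the $3$-dimensional space $\FF_8$; moreover $\mu H = H$ forces $\mu = 1$ (the multiplicative stabiliser of $H$ is a proper subgroup of the cyclic group $\FF_8^*$ of prime order $7$), so the twelve $4$-secants of $H \times H$ fall into exactly three pencils of four through the collinear directions $(1,0)$, $(0,1)$, $(1,1)$. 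Hence the planar configuration you arrive at is entirely self-consistent, and no contradiction can be extracted from the structure inside a single $16$-plane. This is precisely why the paper's endgame is three-dimensional: it first proves that \emph{all five} $16$-planes pass through one common line $\ell$ and that every $4$-secant of $S$ meets $\ell$ in one of three points $R_1, R_2, R_3$, and then counts point--plane incidences at $R_1$ (every plane through $R_1$ is $0$-, $12$- or $16$-secant, and all five $16$-planes contain $R_1$), which yields $12x + 5 \cdot 16 = 80 \cdot 9$ with no integer solution $x$. To repair your proof you must replace Step 4 by an argument of this global kind; your Step 3 applied to all pairs of $16$-planes is a plausible starting point, but as it stands it only produces the three special points on each intersection line $\pi_i \cap \pi_j$ separately and does not show that the five planes share a common line.
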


\begin{proof}
 Suppose to the contrary that $S$ is a set of even type of size $8 \cdot (8+2) = 80$, not a hypercylinder.
 Since $\FF_8$ has no proper subfield but $\FF_2$, \Cref{LmLargeSecant} implies that $S$ has no large secants.
 Therefore, every line intersects $S$ in 0, 2, or 4 points.
 
 Take a point $P \in S$.
 Let $x$ denote the number of $4$-secant lines through $P$.
 Then
 \[
  80 = |S| = 1 + (\theta_2-x)1 + 3x = 74 + 2x,
 \]
 hence $x = 3$.
 
 For every point $P$ in $S$ there are two options:
 \begin{enumerate}
  \item The three 4-secant lines through $P$ lie in a plane $\pi$.
  In this case, $|\pi \cap S| = 16$.
  Since each point of $\pi \cap S$ lies only on 2- and 4-secant lines, its three 4-secant lines must be contained in $\pi$.
  Moreover, there is no plane through $P$ containing exactly two 4-secant lines through $P$, hence $P$ does not lie on a $14$-secant plane.
  \item The three 4-secant lines through $P$ do not lie in a plane.
  In this case there are three planes through $P$ which contain two 4-secant lines and are 14-secant.
 \end{enumerate}
 Let $m_i$ denote the number of $i$-secant planes.
 Since every point lies in at most one 16-secant plane, the number of such points equals $16 m_{16}$.
 Now we do a double count on the incident point-plane tuples $(P,\pi)$ with $P \in S$ and $\pi$ a 14-secant plane.
 This number equals
 \[
  14 m_{14} = (80-16m_{16}) 3 = 16(5-m_{16})3.
 \]
 If $m_{14} \neq 0$, then $16(5-m_{16})3$ divides 14, which is impossible, since all its factors are coprime with 7.
 Thus, $m_{16} = 5$ and $m_{14}=0$.
 
 Hence, every point of $S$ lies on a unique 16-secant plane.
 Denote these planes as $\rho_1, \dots, \rho_5$.
 We say that a point $P \in S$ is of type $i$ if $P \in \rho_i$.
 
 Take a point $P_1$ of type 1 and a point $P_2$ of type 2.
 Consider the line $\vspan{P_1,P_2}$.
 Since it contains points of different types, it can not lie on $16$-secant planes.
 On the other hand, it contains points of $S$, so it can not lie on 0-secant planes.
 Therefore, $\vspan{P_1,P_2}$ only lies on 10- and 12-secant planes.
 A plane $\pi$ through $\vspan{P_1,P_2}$ is 12-secant if and only if it contains a unique $4$-secant through $P_1$ if and only if intersects $\rho_1$ in a 4-secant line.
 Similarly, $\pi$ is a 12-secant plane, if and only if $\pi$ intersects $\rho_2$ in a 4-secant line through $P_2$.
 Thus, the 4-secants through $P_1$ intersect the line $\ell = \rho_1 \cap \rho_2$ in the same points as the 4-secants through $P_2$.
 Let $R_1, R_2, R_3$ denote the points of intersection of $\ell$ with the 4-secant lines through $P_1$.
 If we fix one of the points in $\{P_1,P_2\}$ and vary the other, we see that all 4-secants in $\rho_1$ and $\rho_2$ intersect $\ell$ in $R_1$, $R_2$, or $R_3$.
 
 Now we prove that these three points are the only points in $\rho_1 \setminus S$ on more than one 4-secant line of $\rho_1$.
 Indeed, there are $\frac{16\cdot 3}4 = 12$ 4-secant lines in $\rho_1$, since any of the 16 points of $\rho_1 \cap S$ lies on 3 such lines, and any such line contains 4 points of $\rho_1 \cap S$.
 Hence, the size of
 \[
  V = \sett{(\ell_1, \ell_2, Q)}{\ell_1, \ell_2 \textnormal{ distinct 4-secant lines in } \rho_1, \, Q = \ell_1 \cap \ell_2 }
 \]
 equals $12 \cdot 11$.
 On the other hand every point of $S$ lies on three 4-secants of $\rho_1$, so accounts for $3 \cdot 2 = 6$ triples of $V$, and the points $R_i$ lie on four 4-secants of $\rho_1$, so account for $4 \cdot 3 = 12$ triples in $V$.
 In total this accounts for $16 \cdot 6 + 3 \cdot 12 = 12 \cdot 11$ elements of $V$, hence all elements of $V$.
 
 We can repeat the above argument with $P_2$ replaced by a point $P_i \in \rho_i$ with $i \in \{3,4,5\}$.
 Then we find three points $R_1', R_2', R_3'$ of $\rho_1 \cap \rho_i$ such that every 4-secant line of $\rho_1$ and $\rho_i$ intersects $\rho_1 \cap \rho_i$ in one of these points.
 Since they are points of $\rho_1 \setminus S$ one more than one 4-secant of $\rho_1$, $\{R_1,R_2,R_3\} = \{ R_1', R_2', R_3' \}$.
 We conclude that all 16-secant planes go through $\ell$, and all 4-secant lines intersect $\ell$ in $R_1$, $R_2$, or $R_3$.
 
 Thus, $\ell$ lies on five 16-secant planes, and the other planes through $\ell$ must be 0-secant.
 None of these planes contain 2-secant lines through $R_1$.
 In particular, this implies that $R_1$ does not lie on any 10-secant planes, since such a plane $\pi$ intersects $S$ in a hyperoval, and contains 2-secant lines through every point of $\pi$.
 Let $x$ denote the number of 12-secant planes through $R_1$.
 By performing a double count on
 \[
  \sett{(P,\pi)}{P \in S, \, \pi \textnormal{ a plane, } P,R_1 \in \pi },
 \]
 we see that
 \[
  x \cdot 12 + 5 \cdot 16 = |S| 9 = 80 \cdot 9.
 \]
 But then $x$ is not integer, a contradiction that concludes the proof.
\end{proof}

As we mentioned before, the only minimum sets of even type in $\pg(3,4)$ are the hypercylinders, and the only hyperovals in $\PG(2,q)$, $q \in \{4,8\}$, are regular.
This together with the above proposition and \Cref{LmHypercylinderInduction} proves \Cref{ThmMainHypercylinders}, stating that the only minimum sets of even type in $\pg(3,q)$, with $q \in \{4,8\}$, are hypercylinders over regular hyperovals.
Using \Cref{ResReductionToLines}, this classifies the minimum weight codewords of the codes $\mc_k(n,q)^\perp$ for $q \in \{4,8\}$.

\bigskip

We can now determine the number of minimum weight codewords of $\mc_k(n,q)^\perp$ for $q \in \{4,8\}$.
We need some basic facts about hyperovals, which can e.g.\ be found in \cite[Table 7.2, \S 8]{hirschfeld98}.
A regular hyperoval consists of an irreducible conic together with its nucleus.
There are $q^5-q^2$ irreducible conics in $\pg(2,q)$.
Any 5 points in $\pg(2,q)$, no 3 of which are collinear, lie on a unique irreducible conic.

First consider the case $q=4$.
Let $\mo$ be a hyperoval in $\pg(2,4)$.
If you delete any point from $\mo$, you are left with 5 points, hence they constitute an irreducible conic.
Therefore, the total number of hyperovals in $\pg(2,4)$ equals $\frac{4^5-4^2}{6} = 168$.

If $q=8$, then distinct irreducible conics give rise to distinct hyperovals.
Otherwise, they would intersect in $q>4$ points, which yields a contradiction.
Therefore, there are $8^5 - 8^2 = 32 \, 704$ hyperovals in $\pg(2,8)$.

Lastly, we use the fact the number of $k$-spaces in $\pg(n,q)$ is given by the Gaussian coefficient, see e.g.\ \cite[Theorem 3.1]{hirschfeld98}.
\[
 \gauss{n+1}{k+1} = \prod_{i=0}^k \frac{q^{n+1-i}-1}{q^{k+1-i}-1}.
\]

\begin{crl}
 Suppose that $q \in \{4,8\}$.
 The minimum weight of $\mc_k(n,q)^\perp$ is $q^{n-2}(q+2)$, and the minimum weight codewords of $\mc_k(n,q)^\perp$ are exactly the characteristic vectors of the hypercylinders embedded in an $(n-k+1)$-space.
 Define the number
 \[
  \delta(q) = \begin{cases}
   168 & \text{if } q = 4, \\
   32 \, 704 & \text{if } q=8.
  \end{cases}
 \]
 The number of minimum weight codewords of $\mc_k(n,q)^\perp$ is given by
 \[
  \gauss{n+1}{k-1} \gauss{n-k+2}{3} \delta(q),
 \]
 and all minimum weight codewords are equivalent under the action of the automorphism group of the code.
\end{crl}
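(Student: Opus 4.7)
The plan is to consolidate the results proved earlier in the paper. First, \Cref{ResReductionToLines} reduces minimum weight codewords of $\mc_k(n,q)^\perp$ to minimum weight codewords of $\mc_1(\pi)^\perp$ as $\pi$ ranges over $(n-k+1)$-subspaces of $\pg(n,q)$. Since $q$ is even, such codewords correspond bijectively (via passing to supports) to minimum sets of even type in $\pi$, which by \Cref{ThmMainHypercylinders} are precisely the hypercylinders of $\pi$. The value of the minimum weight reads off from \Cref{ResSizeSmallestEvenSet} applied inside $\pi$.

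For the enumeration I would exploit three independent factors. A hypercylinder of $\pi$ projectively spans $\pi$, because each cone appearing in \Cref{ResHypercylinder} already spans the $(n-k-1)$-subspace containing it, and these cones jointly span the ambient space; consequently the ambient $(n-k+1)$-space is recovered uniquely from the codeword, which is the only subtle point and is what licenses the product count. The number of $(n-k+1)$-subspaces of $\pg(n,q)$ equals $\gauss{n+1}{n-k+2}=\gauss{n+1}{k-1}$. Inside a fixed $(n-k+1)$-space, a hypercylinder is determined by a vertex (a projective subspace of dimension $(n-k+1)-3=n-k-2$, contributing $\gauss{n-k+2}{n-k-1}=\gauss{n-k+2}{3}$ choices) together with a hyperoval in the associated quotient plane (contributing $\delta(q)$ choices, by the hyperoval count carried out in the paragraphs preceding the corollary). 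Multiplying the three factors yields the claimed formula.

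For the transitivity assertion, I would observe that the collineation group $\mathrm{P}\Gamma\mathrm{L}(n+1,q)$ sits inside the automorphism group of $\mc_k(n,q)^\perp$ and acts transitively on $(n-k+1)$-subspaces of $\pg(n,q)$. The stabilizer of one such subspace induces on it the full projective semilinear group and is therefore transitive on its $(n-k-2)$-subspaces; the stabilizer of such a vertex in turn induces the full collineation group of the quotient plane, which by Segre's theorem is transitive on the hyperovals of $\pg(2,q)$ (all of which, for $q\in\{4,8\}$, are regular and projectively equivalent). Composing the three transitivities yields a transitive action on the minimum weight codewords. The corollary is thus essentially a bookkeeping consequence of \Cref{ThmMainHypercylinders}, \Cref{ResReductionToLines}, and the hyperoval counts, and no substantial obstacle arises.
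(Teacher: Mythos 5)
Your proposal is correct and follows essentially the same route as the paper: reduce via \Cref{ResReductionToLines} and \Cref{ThmMainHypercylinders} to counting hypercylinders in $(n-k+1)$-spaces, factor the count as (choice of ambient space) $\times$ (choice of vertex) $\times$ (choice of hyperoval), and get transitivity from the projective equivalence of all hyperovals for $q\in\{4,8\}$. The only point worth making explicit, which the paper handles by its earlier remark that for $q>2$ the vertex is the intersection of all hyperplanes disjoint from the hypercylinder, is that distinct vertices yield distinct hypercylinders, so the product count does not overcount.
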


\begin{proof}
 By \Cref{ResReductionToLines} and \Cref{ThmMainHypercylinders}, a minimum weight codeword of $\mc_k(n,q)^\perp$ is the characteristic function of a hypercylinder embedded in an $(n-k+1)$-space.
 Since this hypercylinder spans the $(n-k+1)$-space, the total number of minimum weight codewords equals the number of $(n-k+1)$-spaces, i.e.\ $\gauss{n+1}{n-k+2} = \gauss{n+1}{k-1}$, multiplied by the number of hypercylinders in $\pg(n-k+1,q)$.
 As we mentioned above, the vertex of a hypercylinder is uniquely determined by the hypercylinder whenever $q>2$, so there are $\gauss{n-k+2}{(n-k+2)-3} = \gauss{n-k+2}3$ choices for a vertex, which all give rise to distinct hypercylinders.
 Once the vertex is fixed, the hypercylinder is constructed by choosing a hyperoval in a plane disjoint to the vertex.
 The choice of the plane is irrelevant, all planes disjoint to the vertex yield the same hypercylinders.
 Thus, the number of hypercylinders in $\pg(n-k+1,q)$ with a fixed vertex equals the number $\delta(q)$ of hyperovals in $\pg(2,q)$.
 This proves the formula for the number of minimum weight codewords.

 Lastly, every collineation of $\pg(n,q)$ gives rise to an automorphism of $\mc_k(n,q)^\perp$.
 Since all hyperovals in $\pg(2,q)$ are projectively equivalent, it easily follows that all hypercylinders embedded in an $(n-k+1)$-space are projectively equivalent.
 Thus, their characteristic vectors are in the same orbit of the automorphism group of $\mc_k(n,q)^\perp$.
\end{proof}

\section{Small weight codewords in the code of points and hyperplanes}
 \label{SecSmallWtPtHyp}

In this section, we will give a short and self-contained proof of \Cref{ResOlgaFerdi}.
The main tool is \Cref{Lm:BBWZ}.
This lemma generalises a powerful lemma by Blokhuis, Brouwer, and Wilbrink \cite[Proposition]{blokhuisbrouwerwilbrink} that works for $\mc_1(2,q)$.
This generalisation was presented to the author by Olga Polverino and Ferdinando Zullo, although they never published it.

\begin{df}
For a set of points $S$ in $\PG(n,q)$ and a point $P$ (which can be in or outside of $S$), the \emph{feet} of $P$ with respect to $S$ are the points $R \in S$ such that
\[
 \vspan{P, R} \cap S \setminus \set P = \set R.
\]
\end{df}

\begin{lm}
 \label{Lm:BBWZ}
 Suppose that $c \in \mc_{n-1}(n,q)$, $q>2$, and $P$ is a point of $\PG(n,q)$.
 Then the feet of $P$ with respect to $\supp(c)$ span a subspace that does not contain $P$.
\end{lm}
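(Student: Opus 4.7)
I would argue by induction on $n$, with base case $n = 2$ the Blokhuis-Brouwer-Wilbrink proposition for $\mc_1(2,q)$. The argument hinges on two preliminaries. Writing $c = \sum_H a_H \chi_H$ and using $\theta_d \equiv 1$ and $q^d \equiv 0 \pmod p$ for $d \geq 1$, one sees that $\sum_{R \in W} c(R) = \sigma := \sum_H a_H$ for every subspace $W$ of $\PG(n,q)$ of projective dimension at least $1$. Applied to the line $PR$ for a foot $R$ of $P$, this yields $c(R) = \sigma - c(P)$; in particular every foot takes the same $c$-value. Moreover, the restriction $c|_H$ of $c$ to any hyperplane $H$ of $\PG(n,q)$ is itself in $\mc_{n-2}(n-1,q)$, because $c|_H = a_H \one_H + \sum_{H' \neq H} a_{H'} \chi_{H' \cap H}$, the latter sum lies in $\mc_{n-2}(n-1,q)$, and $\one_H \equiv \sum_{H''} \chi_{H''} \pmod p$ (sum over hyperplanes $H''$ of $H$). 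When $P \in H$, the feet of $P$ relative to $c|_H$ coincide with $F \cap H$, since the line $PR$ is contained in $H$ whenever $P, R \in H$.

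Now suppose for contradiction that $P \in V := \vspan{F}$, and pick a minimal subset $\{R_1, \ldots, R_k\} \subseteq F$ whose span contains $P$. By minimality these feet are linearly independent, so $V' := \vspan{R_1, \ldots, R_k}$ has projective dimension $k-1$. If $k \leq n$, then $V'$ is a proper subspace of $\PG(n,q)$ and lies in some hyperplane $H$. Applying the induction hypothesis to $c|_H \in \mc_{n-2}(n-1,q)$ with the point $P \in H$, the feet of $P$ in $c|_H$, which include $R_1, \ldots, R_k$, span a subspace of $H$ not containing $P$. But $P \in V' \subseteq \vspan{F \cap H}$, a contradiction.

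The remaining case $k = n+1$ is the main obstacle: then $\{R_1, \ldots, R_{n+1}\}$ is a projective frame of $\PG(n,q)$, the coordinates $\alpha_0, \ldots, \alpha_n$ of $P$ in this frame are all nonzero, and $V' = \PG(n,q)$ is not contained in any hyperplane. To handle it I would apply the induction hypothesis to each hyperplane $H_{ij}$ spanned by $P$ and $\{R_\ell : \ell \neq i, j\}$. Since $P \in H_{ij}$ cannot lie in the span of the feet of $P$ inside $H_{ij}$, and $\{R_\ell : \ell \neq i, j\}$ already spans a codimension-$1$ subspace of $H_{ij}$, the feet of $P$ lying in $H_{ij}$ must be contained in the $(n-2)$-dimensional subspace $\vspan{R_\ell \, \, || \, \, \ell \neq i, j}$. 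Translated into coordinates, any other foot $R = \sum \beta_k R_k$ must have pairwise distinct nonzero ratios $\beta_k / \alpha_k$. Combining these constraints across all pairs $(i,j)$ together with the assumption $q > 2$ to derive a final contradiction is the technical heart of the argument, and is the step I expect to be the hardest.
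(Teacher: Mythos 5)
Your reduction is correct as far as it goes: the preliminary facts (constancy of $\sum_{R \in W} c(R)$ over subspaces of dimension $\geq 1$, and $c|_H \in \mc_{n-2}(n-1,q)$ with feet in $H$ equal to $F \cap H$) are all valid, and the inductive step genuinely disposes of the case where a minimal dependent set of feet has size $k \leq n$, since then $\vspan{R_1,\dots,R_k}$ together with $P$ sits inside a hyperplane. But the frame case $k = n+1$ is not a technical loose end -- it is the entire content of the lemma, and your sketch for it does not close. The constraints you extract from the hyperplanes $H_{ij}$ (every foot $R = \sum_\ell \beta_\ell R_\ell$ has pairwise distinct nonzero ratios $\beta_\ell/\alpha_\ell$) only constrain feet \emph{other} than $R_1, \dots, R_{n+1}$, and each $R_i$ itself has a single nonzero coordinate, so it satisfies the constraint vacuously. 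In particular, the configuration ``$F = \{R_1,\dots,R_{n+1}\}$ is a frame and $P$ has all coordinates nonzero'' is perfectly consistent with everything you have derived, so no contradiction can follow from these constraints alone; you would need genuinely new input. Note also that for $n=2$ the induction gives nothing and you fall back entirely on the Blokhuis--Brouwer--Wilbrink proposition, whose proof is precisely the frame-case argument you are missing; so the approach as written outsources the hard part rather than proving it. Finally, the hypothesis $q>2$ never enters your argument, which is a warning sign, since the lemma is false for $q=2$.

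For comparison, the paper handles all values of $k$ (including the frame case) uniformly and without induction: taking a minimal set of feet $R_0,\dots,R_k$ with $P \in \vspan{R_0,\dots,R_k}$ and coordinatising so that $P = \vspan{e_0}$, $R_i = \vspan{e_i}$, $R_0 = \vspan{e_0+\dots+e_k}$, it exhibits an explicit vector $v$ supported on $\bigcup_i \vspan{P,R_i}$ (with $v(\vspan{\alpha e_0 + e_i}) = \alpha$ and $v(\vspan{\alpha e_0 + e_1 + \dots + e_k}) = -\alpha$) lying in the dual of the $\FF_q$-span of $\mc_{n-1}(n,q)$; the verification uses $\sum_{\alpha \in \FF_q} \alpha = 0$, which is where $q>2$ is needed, and the contradiction is $v \cdot c = -c(R_0) \neq 0$. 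Some such dual-vector (or equivalent) construction appears unavoidable for the frame case, so you should either carry out that construction or find a substitute for it; the induction by itself cannot supply it.
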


\begin{proof}
 Suppose that $P$ is contained in the subspace spanned by its feet.
 Let $R_0, \dots, R_k$ be a set of feet of $P$ of smallest possible size such that $P \in \vspan{R_0, \dots, R_k}$.
 By the minimality, $P, R_0, \dots, R_k$ constitute a frame of $\vspan{R_0,\dots,R_k}$, and there exist linearly independent vectors $e_0, \dots, e_k \in \FF_q^{n+1}$ such that
 \begin{align*}
  P = \vspan{e_0}, &&
  R_1 = \vspan{e_1}, &&
  \dots, &&
  R_k = \vspan{e_k}, &&
  R_0 = \vspan{e_0 + e_1 + \ldots + e_k}.
 \end{align*}
 Let $C'$ denote the span of $\mc_{n-1}(n,q)$ over $\FF_q$.
 Then $\mc_{n-1}(n,q) \subseteq C'$, so $c \in C'$.
 We will now construct a vector in $C'^\perp$.
 Let $\mp$ denote the set of points of $\pg(n,q)$ and consider
 \[
  v: \mp \to \FF_q:
  Q \mapsto \begin{cases}
   \alpha & \text{if $Q = \vspan{\alpha e_0 + e_i}$ with } 1 \leq i \leq k, \\
   - \alpha & \text{if } Q = \vspan{\alpha e_0 + e_1 + \ldots + e_k}, \\
   0 & \text{otherwise}.
  \end{cases}
 \]
 For $i = 0, \dots, k$ denote $\ell_i = \vspan{P,R_i}$.
 Then $\supp(v) = \left(\bigcup_{i=0}^k \ell_i \right) \setminus \set{P,R_0',R_1 \dots, R_k}$, with $R_0' = \vspan{e_1 + \ldots + e_k}$.
 Let us check that $v \in C'^\perp$.
 Choose a hyperplane $\pi$.
 If $P \in \pi$, then every line $\ell_i$ either intersects $\pi$ in $P$ or is contained in $\pi$.
 Since $v(P) = 0$ and $\sum_{Q \in \ell_i} v(Q) = \sum_{\alpha \in \FF_q} \alpha = 0$ (because $q>2$), $v \cdot \chi_\pi = 0$.
 If $P \notin \pi$, then for $i=1,\dots,k$, $\pi$ intersects $\ell_i$ in some point $\vspan{\alpha_i e_0 + e_i}$, and it must intersect $\ell_0$ in $\vspan{(\alpha_1 + \ldots + \alpha_k)e_0 + e_1 + \ldots + e_k}$.
 Therefore,
 \[
  v \cdot \chi_\pi = \alpha_1 + \ldots + \alpha_k - (\alpha_1 + \ldots + \alpha_k) = 0.
 \]
 Hence, indeed $v \in C'^\perp$.

 Since for each line $\ell_i$, $\supp(c) \cap \ell_i \setminus \set P = \set{R_i}$ and $\supp(v) = \left(\bigcup_{i=0}^k \ell_i \right) \setminus \set{P,R_0',R_1 \dots, R_k}$,
 $\supp(c) \cap \supp(v) = \set{R_0}$.
 Thus, $v \cdot c = v(R_0) c(R_0) = - c(R_0) \neq 0$.
 This contradicts that $c \in C'$ and $v \in C'^\perp$.
 Therefore, we may conclude that $P$ is not contained in the subspace spanned by its feet with respect to $\supp(c)$.
\end{proof}

We will use one more simple lemma, and include a proof for the sake of completeness.

\begin{lm}[{\cite[Lemma 2]{lavrauw}}]
 \label{LmConstInprod}
 Suppose that $c \in \mc_{n-1}(n,q)$.
 Then there exists a scalar $\beta \in \FF_p$ such that for each subspace $\rho$ of dimension at least 1, $c \cdot \chi_\rho = \beta$.
\end{lm}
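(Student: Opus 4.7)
The plan is to expand an arbitrary codeword as an $\FF_p$-linear combination of characteristic vectors of hyperplanes and then compute the inner product with $\chi_\rho$ directly, using that $q$ is a power of $p$.

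Write $c = \sum_{i} \alpha_i \chi_{\pi_i}$ where each $\pi_i$ is a hyperplane of $\pg(n,q)$ and each $\alpha_i \in \FF_p$; this is possible by the very definition of $\mc_{n-1}(n,q)$. For any subspace $\rho$ of dimension $d \geq 1$, we have
\[
 c \cdot \chi_\rho = \sum_i \alpha_i \, \chi_{\pi_i} \cdot \chi_\rho = \sum_i \alpha_i \, |\pi_i \cap \rho|.
\]
If $\rho \subseteq \pi_i$, then $|\pi_i \cap \rho| = \theta_d$; otherwise $\pi_i \cap \rho$ is a hyperplane of $\rho$, so $|\pi_i \cap \rho| = \theta_{d-1}$.

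The key observation is that modulo $p$ these two quantities become indistinguishable. Indeed, since $q = p^h$, every positive power of $q$ vanishes in $\FF_p$, so $\theta_d = 1 + q + q^2 + \ldots + q^d \equiv 1 \pmod p$ as soon as $d \geq 0$. In particular $\theta_d \equiv \theta_{d-1} \equiv 1 \pmod p$ for all $d \geq 1$, and the splitting of the sum into the cases $\rho \subseteq \pi_i$ and $\rho \not\subseteq \pi_i$ simplifies uniformly to
\[
 c \cdot \chi_\rho \equiv \sum_i \alpha_i \pmod p.
\]
The right-hand side depends only on $c$, not on $\rho$, so the conclusion follows with $\beta := \sum_i \alpha_i \in \FF_p$. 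There is no real obstacle here; the only subtlety is the need for $\dim \rho \geq 1$, which is exactly what guarantees that both $\theta_d$ and $\theta_{d-1}$ reduce to $1$ modulo $p$. For $\dim \rho = 0$ the value $c \cdot \chi_\rho$ is just $c(P)$, which of course need not be constant in $P$.
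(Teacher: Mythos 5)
Your proof is correct and follows essentially the same route as the paper: expand $c$ as an $\FF_p$-linear combination of hyperplane characteristic vectors and observe that $|\pi_i \cap \rho| \equiv 1 \pmod p$ whenever $\dim \rho \geq 1$, so that $c \cdot \chi_\rho = \sum_i \alpha_i$ independently of $\rho$. The only cosmetic difference is that you split explicitly into the cases $\rho \subseteq \pi_i$ and $\rho \not\subseteq \pi_i$, whereas the paper compresses this into the single observation that $\dim(\rho \cap \pi_i) \geq 0$.
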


\begin{proof}
 Since $c \in \mc_{n-1}(n,q)$, $c = \sum_i \alpha_i \chi_{\pi_i}$ for some scalars $\alpha_i \in \FF_p$ and hyperplanes $\pi_i$.
 Define $\beta = \sum_i \alpha_i$.
 Take a subspace $\rho$ of dimension at least 1.
 For every hyperplane $\pi_i$, $\dim \rho \cap \pi_i \geq 0$.
 This implies that $|\rho \cap \pi_i| \equiv 1 \pmod p$ and $\chi_\rho \cdot \chi_{\pi_i} = 1$.
 Hence,
 \[
  c \cdot \chi_\rho
  = \left( \sum_i \alpha_i \chi_{\pi_i} \right) \cdot \chi_\rho 
  = \sum_i \alpha_i (\chi_{\pi_i} \cdot \chi_\rho)
  = \sum_i \alpha_i = \beta. \qedhere
 \]
\end{proof}

The following proof also uses the notion of an \emph{arc} in $\pg(2,q)$.
This is a set of points in $\pg(2,q)$, no three on a line.
It is well-known that an arc contains at most $q+2$ points, and that there are always lines which are disjoint to the arc, see e.g.\ \cite[\S 8.1]{hirschfeld98}.

\begin{thm}
 Let $q>2$.
 Suppose that $c \in \mc_{n-1}(n,q)$ and $0 < \wt(c) \leq 2q^{n-1}$.
 Then either
 \begin{itemize}
  \item $\wt(c) = \theta_{n-1}$ and $c = \alpha \chi_\pi$,
  \item $\wt(c) = 2q^{n-1}$ and $c = \alpha (\chi_\pi - \chi_\rho)$,
 \end{itemize}
 with $\alpha \in \FF_p$, and $\pi$, $\rho$ hyperplanes of $\pg(n,q)$.
\end{thm}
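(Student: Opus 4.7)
The plan is to invoke \Cref{LmConstInprod} to extract the scalar $\beta \in \FF_p$ with $c \cdot \chi_\sigma = \beta$ for every subspace $\sigma$ of positive dimension, and then split cases according to whether $\beta = 0$. Since $\beta = \alpha$ when $c = \alpha \chi_\pi$ and $\beta = 0$ when $c = \alpha(\chi_\pi - \chi_\rho)$, this dichotomy matches the two conclusions of the theorem. Write $S = \supp(c)$ and $w = \wt(c)$.

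In the case $\beta \neq 0$ every line meets $S$, and since $w \leq 2q^{n-1} < \theta_n$ I may pick $P \notin S$. Writing $t_i$ for the number of $i$-secants to $S$ through $P$, the relations $t_0 = 0$, $\sum_i t_i = \theta_{n-1}$ and $\sum_i i t_i = w$ yield
\[
 t_1 \;\geq\; 2\theta_{n-1} - w \;\geq\; 2\theta_{n-2} \;>\; \theta_{n-2}.
\]
So the set $T_P$ of feet of $P$ cannot lie in an $(n-2)$-space, and \Cref{Lm:BBWZ} forces $\vspan{T_P}$ to be a hyperplane $\pi$ avoiding $P$. Next I would exclude points of $S \setminus \pi$: any such $R$ would place a second $S$-point at $PR \cap \pi$ on the line $PR$ (otherwise $R$ itself would be a foot off $\pi$), and a refined count of non-foot lines combined with $w \leq 2q^{n-1}$ rules this out. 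Once $S \subseteq \pi$, the constant inner-product relation applied to any line through $Q \in \pi$ not contained in $\pi$ gives $c(Q) = \beta$ at once, so $c = \beta \chi_\pi$ and $w = \theta_{n-1}$.

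In the case $\beta = 0$ I pick $P \in S$ and rescale so that $c(P) = 1$. For every line $\ell$ through $P$ the constraint $\sum_{Q \in \ell} c(Q) = 0$ gives $\sum_{Q \in \ell \setminus \set P} c(Q) = -1$, so each foot of $P$ carries value $-1$ and no line through $P$ misses $S$ off $P$. The analogous counting yields $|T_P| \geq 2\theta_{n-2}+1 > \theta_{n-2}$, and \Cref{Lm:BBWZ} again places the feet in a hyperplane $\pi$ with $P \notin \pi$. The non-foot lines through $P$ must then collectively fill a second hyperplane $\rho$ through $P$ on which $c$ takes value $+1$; checking via the inner-product condition that $c$ takes no further non-zero values, one concludes $S = (\pi \cup \rho) \setminus (\pi \cap \rho)$, whence $c = \chi_\rho - \chi_\pi$ and $w = 2q^{n-1}$.

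The main obstacle will be the sharpness of these counts: in Case $\beta \neq 0$, eliminating stray $S$-points off the hyperplane of feet; in Case $\beta = 0$, correctly reconstructing the second hyperplane $\rho$ while excluding spurious $c$-values. The arc bound in $\pg(2,q)$ (no arc exceeds $q+2$ points, as noted in the excerpt) should enter at this step, since restricting $c$ to a generic plane produces a configuration of $\pm 1$-points whose line-by-line balance severely constrains its structure.
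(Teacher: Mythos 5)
Your skeleton is the right one — the dichotomy on the scalar $\beta$ from \Cref{LmConstInprod}, the use of \Cref{Lm:BBWZ}, and the count showing that a point $P$ outside (resp.\ inside) $\supp(c)$ has at least $2\theta_{n-2}$ (resp.\ $2\theta_{n-2}+1$) feet, so that the feet span a full hyperplane avoiding $P$. This matches the paper's toolkit. But in both cases you stop exactly where the real work begins, and the steps you defer do not follow from the counts you have set up. In Case $\beta \neq 0$, knowing that the feet of an \emph{arbitrary} $P \notin \supp(c)$ span a hyperplane $\pi$ with $P \notin \pi$ only yields $|\supp(c) \cap \pi| \geq t_1 \geq 2\theta_{n-1} - \wt(c) \geq 2\theta_{n-2}$, which leaves room for up to $2q^{n-1} - 2\theta_{n-2}$ support points off $\pi$; no refinement of the line count through this single $P$ closes that gap. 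Your parenthetical claim that a point $R \in \supp(c)\setminus\pi$ forces a second support point at $PR \cap \pi$ is also unjustified: the extra point guaranteed by ``$R$ is not a foot'' can sit anywhere on the line $PR$. The paper's proof chooses $P$ differently: it takes $\pi$ to be a hyperplane meeting $\supp(c)$ in the \emph{maximal} number $m$ of points and $P \in \pi \setminus \supp(c)$, gets $x \geq m$ for the number $x$ of feet from the weight count and $x \leq m$ from \Cref{Lm:BBWZ} together with the maximality of $m$ (the feet lie in a hyperplane $\tau$), and then extracts from the forced equality that $\pi \cap \tau \cap \supp(c) = \emptyset$, contradicting $\chi_{\pi\cap\tau}\cdot c = \beta \neq 0$ for $n \geq 3$; the case $n=2$ needs a separate argument via arcs. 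That codimension-two contradiction is the missing engine, and nothing in your sketch replaces it.

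Case $\beta = 0$ has the same problem in sharper form. You assert that the non-tangent lines through $P$ ``collectively fill a second hyperplane $\rho$ on which $c$ takes value $+1$'' and that the inner-product condition excludes further values, but this is precisely the hard content: a priori $c$ can take many values in $\FF_p^*$, and a $\geq 3$-secant line through $P$ says nothing about individual coefficients beyond their sum being $-1$. The paper gets around this by defining, for each value $\alpha$, the maximal hyperplane secancy $m_\alpha$ over hyperplanes containing an $\alpha$-point, proving $m_\alpha = m_{-\alpha}$ and that each value class is exactly cut out by a hyperplane $\pi_\alpha$, and then ruling out a third value $\gamma$ by showing it would force all values of $\FF_p^*$ to occur with equal $m_\alpha$, hence $(p-1) \mid 2q^{n-1}$ and $p = 3$. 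Your plan has no substitute for this divisibility argument, and the appeal to the arc bound at the end points at the wrong place (the paper uses arcs only in the planar subcase of $\beta \neq 0$). So the proposal is a correct opening, but the two decisive steps — forcing $m = \theta_{n-1}$ when $\beta \neq 0$, and pinning the value set down to $\{\pm\alpha\}$ when $\beta = 0$ — are genuinely missing.
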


\begin{proof}
  Take a codeword $c \in \mc_{n-1}(n,q)$, and let $\beta$ be as in \Cref{LmConstInprod}.

 \noindent \underline{Case 1: $\beta \neq 0$}.

 Define
 \[
  m = \max \sett{|\supp(c) \cap \pi|}{\pi \textnormal{ a hyperplane of } \pg(n,q)}
 \]
 as the maximum number of points of $\supp(c)$ in a hyperplane.
 We will show that $m = \theta_{n-1}$.
 Suppose to the contrary that $m < \theta_{n-1}$.
 Take an $m$-secant hyperplane $\pi$ and a point $P \in \pi \setminus \supp(c)$.
 Note that every line contains a point of $\supp(c)$ since  $\beta \neq 0$.
 Let $x$ denote the number of tangent lines through $P$.
 Since there are $q^{n-1}$ lines through $P$ outside of $\pi$,
 \begin{equation}
  \label{eq:Beta=/=0}
  2q^{n-1} \geq \wt(c) \geq m + x \cdot 1 + (q^{n-1}-x) \cdot 2 = 2q^{n-1} + m-x.
 \end{equation}
 This implies that $x \geq m$.
 On the other hand, by \Cref{Lm:BBWZ}, there exists some hyperplane $\tau \not \ni P$ containing the $x$ feet of $P$ with respect to $\supp(c)$.
 Therefore, $x \leq m$.
 Hence, $x=m$, and we have equality in \Cref{eq:Beta=/=0}.
 Furthermore, $\tau$ must intersect $\supp(c)$ exactly in the $m$ feet of $P$, and equality in \Cref{eq:Beta=/=0} can only hold if none of the feet of $P$ are contained in $\pi$.
 Hence, $\pi \cap \tau \cap \supp(c) = \emptyset$.
 If $n\geq 3$, then $\dim(\pi \cap \tau) \geq 1$, so $\chi_{\pi \cap \tau} \cdot c = \beta \neq 0$, a contradiction.
 If $n=2$, then $m > 2$, otherwise $\supp(c)$ is an arc and there are lines missing $\supp(c)$ which would imply that $\beta = 0$.
 By equality in \Cref{eq:Beta=/=0}, $P$ lies on a unique $m$-secant line, $m$ tangent lines, and $q-m$ 2-secant lines.
 Moreover, this holds for any point outside $\supp(c)$, lying on an $m$-secant line.
 But then $\pi \cap \tau$ would be a point outside of $\supp(c)$ lying on at least 2 $m$-secant lines, a contradiction.
 So in both cases, $m < \theta_{n-1}$ leads to a contradiction.

 Thus, $\supp(c)$ contains a hyperplane $\pi$.
 Take a point $P \in \pi$.
 Then $c(P) = \beta$.
 Otherwise, every line through $P$ outside of $\pi$ contains at least one other point of $\supp(c)$ and $\wt(c) \geq \theta_{n-1} + q^{n-1} > 2q^{n-1}$.
 Furthermore, if $\supp(c)$ contains a point $P \notin \pi$, then every line $\ell$ through $P$ contains another point of $\supp(c) \setminus \pi$.
 Otherwise, $\chi_\ell \cdot c = c(P) + \beta \neq \beta$, a contradiction.
 But then $\wt(c) \geq 1 + 2 \theta_{n-1}$, a contradiction.
 Hence, $\supp(c) = \pi$ which implies that $c = \beta \chi_\pi$.

 \noindent \underline{Case 2: $\beta = 0$.}

 Let $A = \sett{c(P)}{P \in \supp(c)}$ denote the set of non-zero coefficients that $c$ takes.
 For every $\alpha \in A$ define
 \[
  m_\alpha = \max \sett{|\supp(c) \cap \pi|}{\pi \textnormal{ a hyperplane of } \pg(n,q), \, (\exists P \in \pi)(c(P) = \alpha)}.
 \]
 Now suppose that $\alpha \in A$, and $m_{\alpha} \geq m_{-\alpha}$ (where we set $m_{-\alpha}=0$ if $-\alpha \notin A$).
 Take an $m_\alpha$-secant hyperplane $\pi$ containing a point $P$ with $c(P) = \alpha$.
 Let $x$ denote the number of 2-secant lines through $P$.
 Since there are no tangent lines to $\supp(c)$,
 \begin{equation}
  \label{eq:Beta=0}
  2q^{n-1} \geq \wt(c) \geq m_\alpha + x \cdot 1 + (q^{n-1}-x) \cdot 2
  = 2q^{n-1} + m_{\alpha} - x.
 \end{equation}
 Thus, $x \geq m_\alpha$.
 On the other hand, the $x$ feet of $P$ lie in some hyperplane $\tau \not \ni P$, hence $x \leq m_{-\alpha} \leq m_\alpha$.
 Therefore, $x = m_\alpha = m_{-\alpha}$.
 This implies that equality holds in \Cref{eq:Beta=0}.
 Therefore, the points of $\tau \cap \supp(c)$ are exactly the $m_\alpha$ feet of $P$, $\pi \cap \tau \cap \supp(c) = \emptyset$, and the lines $\ell$ with $P \in \ell \not \subseteq \pi$ consist of $m_\alpha$ 2-secants, and $q^{n-1}-m_\alpha$ 3-secants.

 If $q$ is even, then there are no 3-secant lines $\ell$ to $\supp(c)$, because otherwise $\beta = \chi_\ell \cdot c = 1$, contradicting our assumption that $\beta = 0$.
 In this case, we find $m_1 = q^{n-1}$.
 Then $\pi$ and $\tau$ are $q^{n-1}$-secant hyperplanes and their intersection is empty, hence $\supp(c) = \pi \triangle \rho$ and $c = \chi_\pi - \chi_\rho$.

 So for the rest of the proof assume that $q$ is odd.
 Since no 3-secant line through $P$ can contain a point with coefficient $-\alpha$, all points with coefficient $-\alpha$ are contained in $\pi \cup \tau$.
 Since we proved that $m_\alpha = m_{-\alpha}$, we can repeat the previous argument where we replace $P$ by some point $R$ of $\tau$ with coefficient $-\alpha$.
 Then we find some $m_\alpha$-secant hyperplane $\pi'$ such that all points with coefficient $\alpha$ are contained in $\tau \cup \pi'$.
 We proved that all points of $\tau \cap \supp(c)$ have coefficient $-\alpha$, hence $\pi'$ is an $m_\alpha$-secant hyperplane that intersects $\supp(c)$ exactly in the points with coefficient $\alpha$.
 In conclusion, for every scalar $\alpha \in A$, there exists an $m_\alpha$-secant hyperplane, which we'll denote by $\pi_\alpha$, such that $\pi_\alpha \cap \supp(c) = \sett{P \in \supp(c)}{c(P) = \alpha}$.

 Next, we prove that $A = \set{\pm \alpha}$ for some scalar $\alpha$.
 Take a scalar $\alpha \in A$, then we know that $-\alpha \in A$.
 Suppose that $A$ also contains a scalar $\gamma \neq \pm \alpha$.
 Take a point $P$ with $c(P) = \gamma$.
 Then $P \notin \pi_\alpha \cup \pi_{-\alpha}$, so every line through $P$ and a point of $\pi_\alpha \cap \supp(c)$ is a 3-secant.
 Therefore, it contains a point with coefficient $-\alpha-\gamma$.
 Hence, $m_{\alpha+\gamma} = m_{-\alpha-\gamma} \geq m_\alpha$ for every $\alpha, \gamma \in A$.
 It follows that $A = \FF_p^*$ and $m_\alpha$ is equal for all $\alpha$ in $A$.
 But then $2q^{n-1} = \wt(c) = |A|m_\alpha = (p-1)m_\alpha$.
 Since $p-1$ is coprime with $q^{n-1}$, $p-1$ must divide 2.
 This implies that $p=3$, in which case $A = \FF_3^* = \set{\pm1}$.

 Hence, $A = \set{\pm \alpha}$, and $2q^{n-1} = \wt(c) = 2m_\alpha$.
 This implies that $m_\alpha = q^{n-1}$.
 Since $\pi_\alpha \cap \pi_{-\alpha} \cap \supp(c) = \emptyset$, the only option is that
 \[
  c(P) = \begin{cases}
      \alpha & \text{if } P \in \pi_\alpha \setminus \pi_{-\alpha}, \\
      -\alpha & \text{if } P \in \pi_{-\alpha} \setminus \pi_\alpha, \\
      0 & \text{otherwise.}
  \end{cases}
 \]
 I.e.\ $c = \alpha (\chi_{\pi_\alpha} - \chi_{\pi_{-\alpha}})$.
\end{proof}

This proves \Cref{ResOlgaFerdi} in case $q>2$.
For the sake of completeness, we also include a proof for $q=2$.

\begin{lm}
 The only codewords in $\mc_{n-1}(n,2)$ are $\zero$, $\one$, and the codewords of the form $\chi_\pi$ and $\chi_\pi - \chi_\rho$, with $\pi$ and $\rho$ hyperplanes.
\end{lm}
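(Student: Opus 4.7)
The plan is to exploit the special structure of $\FF_2$. Identify the point set $\mp$ of $\pg(n,2)$ with $\FF_2^{n+1} \setminus \set \zero$, so that each hyperplane $\pi$ corresponds to a unique nonzero linear form $\ell_\pi : \FF_2^{n+1} \to \FF_2$ with kernel $\pi \cup \set \zero$. The key observation is that, as $\FF_2$-valued functions on $\mp$,
\[
 \chi_\pi = \one + \ell_\pi,
\]
since $\ell_\pi(P) = 0$ precisely when $P \in \pi$. This linearisation trick is what makes $q=2$ special.

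Next, I would take an arbitrary codeword $c \in \mc_{n-1}(n,2)$ and write it as $c = \sum_{i=1}^k \chi_{\pi_i}$ for some (not necessarily distinct) hyperplanes $\pi_i$. Applying the identity above gives
\[
 c = k \cdot \one + L, \qquad \text{where } L = \sum_{i=1}^k \ell_{\pi_i}
\]
is itself a linear form on $\FF_2^{n+1}$. Hence $c$ is completely determined by the parity of $k$ and by $L$.

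Finally, I would split into four cases. If $L = 0$, then $c$ equals $\zero$ or $\one$ depending on the parity of $k$. If $L \neq 0$ and $k$ is odd, then $c = \one + L$ on $\mp$, whose support is the hyperplane $\ker L \cap \mp$, so $c = \chi_\pi$ with $\pi$ this hyperplane. If $L \neq 0$ and $k$ is even, then $c = L$ on $\mp$, whose support is $\mp \setminus \ker L$; picking any two distinct hyperplanes $\pi, \rho$ with $\ell_\pi + \ell_\rho = L$ (always possible since $L \neq 0$), one checks that $\pi \triangle \rho = \mp \setminus \ker L$, and therefore $c = \chi_\pi + \chi_\rho = \chi_\pi - \chi_\rho$ over $\FF_2$. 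There is no real obstacle: the whole classification falls out of the linearisation $\chi_\pi = \one + \ell_\pi$, which has no analogue over larger fields and is precisely why the case $q=2$ needs its own short argument.
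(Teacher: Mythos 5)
Your proof is correct, and it takes a genuinely different route from the one in the paper. The paper argues by closure: it checks that the candidate set $\set{\zero, \one, \chi_\pi, \chi_\pi - \chi_\rho}$ is stable under adding $\chi_\sigma$ for any hyperplane $\sigma$, using the observation that $\chi_\rho - \chi_\sigma = \one - \chi_\tau$ where $\tau$ is the third hyperplane through $\rho \cap \sigma$; since the set contains $\zero$ and is closed under the generators, it is the whole code. You instead linearise: identifying points with nonzero vectors of $\FF_2^{n+1}$ (which is only bijective because $q=2$), you write $\chi_\pi = \one + \ell_\pi$ and conclude that every codeword is $k \cdot \one + L$ with $L$ a linear form, then read off the four cases. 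Your argument buys slightly more: it exhibits the code explicitly as $\vspan{\one} \oplus \set{\text{linear forms restricted to } \mp}$, hence gives its dimension $n+2$ for free, and it makes transparent why the statement is special to $q=2$. The paper's argument is coordinate-free and marginally shorter. The only point you should make explicit is the existence claim in the last case: you need a linear form $\ell_\pi$ with $\ell_\pi \neq 0$ and $\ell_\pi \neq L$, which exists because there are $2^{n+1}-1 \geq 3$ nonzero linear forms; then $\ell_\rho = L + \ell_\pi$ is automatically nonzero and distinct from $\ell_\pi$ since $L \neq 0$.
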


\begin{proof}
 It suffices to prove that if we take a codeword $c$ of one of the forms described in the lemma, then for any hyperplane $\pi$, $c + \chi_\pi$ is also of one of these forms.
 This clearly holds if $c = \zero$ or $c = \chi_\rho$ for a hyperplane $\rho$.
 Note that if $\rho$ and $\sigma$ are hyperplanes $\chi_\rho - \chi_\sigma = \chi_{\rho \triangle \sigma}$ and $\rho \triangle \sigma$ is the complement of the unique hyperplane $\tau$ distinct from $\rho$ and $\sigma$ through $\rho \cap \sigma$.
 Hence, $\chi_\rho - \chi_\sigma = \one - \chi_\tau$.
 It immediately follows that if $c = \chi_\rho - \chi_\sigma$ or $c = \one$, then $c + \chi_\pi$ has the form of one of the codewords described in the lemma.
\end{proof}

\section{Conclusion}
 \label{SecConclusion}

In this paper, we studied small weight codewords of $\mc_k(n,q)$ and $\mc_k(n,q)^\perp$.
Many questions remain, especially regarding the dual codes.

\begin{prob}
 Determine for $q=2^h > 8$ whether or not there are minimum even sets in $\PG(3,q)$ different from the hypercylinders.
\end{prob}

We note that \Cref{ThmMainHypercylinders} cannot hold for $q > 8$ without removing the adjective ``regular'', since every Desarguesian projective plane of even order $q > 8$ contains non-regular hyperovals, and they have been classified for $q  \leq 64$ \cite{vandendriessche19}.

\begin{prob}
 \label{ProbMinWeightDualPlane}
 For $q$ odd and non-prime, determine the minimum weight of $\mc_1(2,q)^\perp$, and characterise the minimum weight codewords.
\end{prob}

For general $q$, as far as the author knows, the best known lower bound on the minimum weight is given by $2(q - \frac q p + 1)$ (see \Cref{CrlMinWeight}), and the best known upper bound is given $2q - \frac{q-1}{p-1} + 1$ \cite{key}.
As we stated before, the lower bound has recently been improved to $2q - 2\frac q p + 5$ in case $q = p^2 \geq 5^2$ \cite{deboeckvandevoorde2022}.

\Cref{ProbMinWeightDualPlane} also remains open in $\PG(n,q)$ with $n>2$.

\begin{prob}
 Do all minimum weight codewords of $\mc_1(n,q)^\perp$ arise from minimum weight codewords of $\mc_1(2,q)^\perp$?
\end{prob}

A general construction, of which hypercylinders are an instance, to construct small weight codewords of $\mc_1(n,q)^\perp$ from small weight codewords in $\mc_1(2,q)^\perp$ is given by \cite[Lemma 6]{bagchi} or \cite[Construction 7.12]{adriaensendenaux}.
For $q$ prime, all minimum weight codewords are instances of this construction \cite[Proposition 2]{bagchi}, and now we know that the same holds for $q \in \{4,8\}$.

\bigskip

\noindent \textbf{Acknowledgements.}
The author would like to thank Aart Blokhuis and Zsuzsa Weiner for the communication concerning the proof of \Cref{ResLowerBound} (1), and Olga Polverino and Ferdinando Zullo for sharing \Cref{Lm:BBWZ} and helpful discussions.

\newcommand{\etalchar}[1]{$^{#1}$}

\end{document}